\newtheorem{teo}{Theorem}
\newtheorem{lema}[teo]{Lemma}
\newtheorem{cor}[teo]{Corollary}
\newtheorem{prop}[teo]{Proposition}
\newtheorem{defi}[teo]{Definition}
\newtheoremstyle{mytheoremstyle} % name
    {\topsep}                    % Space above
    {\topsep}                    % Space below
    {}                   % Body font
    {}                           % Indent amount
    {\scshape}                   % Theorem head font
    {.}                          % Punctuation after theorem head
    {.5em}                       % Space after theorem head
    {}  % Theorem head spec (can be left empty, meaning ‘normal’)
\theoremstyle{mytheoremstyle} \newtheorem{nota}{Remark}
\numberwithin{equation}{section}
\newcommand{\real}{\mathbb{R}}
\newcommand{\nat}{\mathbb{N}}
\newcommand \ben {\begin{equation}}
\newcommand \een {\end{equation}}
\newcommand \be {\begin{equation*}}
\newcommand \ee {\end{equation*}}
\newcommand \bi {\begin{itemize}}
\newcommand \ei {\end{itemize}}
\date{}
\title{\textbf{Characterization of ground-states for a system of \textit{M} coupled semilinear Schrödinger equations and applications}}
\author{Simão Correia\\ \textit{CMAF-UL and FCUL, Av.\ Prof.\ Gama Pinto 2,}\\
\textit{1649-003 Lisboa, Portugal}\\
\textit{Email adress: sfcorreia@fc.ul.pt}}
\begin{document}
\maketitle

\begin{abstract}
We focus on the study of ground-states for the system of $M$ coupled semilinear Schrödinger equations with power-type nonlinearities and couplings. General results regarding existence and characterization are derived using a variational approach. We show the usefulness of such a characterization in several particular cases, including those for which uniqueness of ground-states is already known. Finally, we apply the results to find the optimal constant for the vector-valued Gagliardo-Nirenberg inequality and we study global existence, $L^2$-concentration phenomena and blowup profile for the evolution system in the $L^2$-critical power case.
\vskip10pt
\noindent\textbf{Keywords}: Coupled semilinear Schrödinger equations; ground-states; qualitative properties; vector-valued Gagliardo-Nirenberg inequality.
\vskip10pt
\noindent\textbf{AMS Subject Classification 2010}: 35Q55, 35J47, 35E15, 35B40, 46E35.
\end{abstract}

\begin{section}{Introduction}
In this work, we consider the system of $M$ coupled semilinear Schrödinger equations
\ben\tag{M-NLS}
i(v_i)_t + \Delta v_i + \sum_{j=1}^M k_{ij}|v_j|^{p+1}|v_i|^{p-1}v_i=0,\quad i=1,...,M
\een
where $V=(v_1,...,v_M):\real^+\times\real^N\to\real^M$, $k_{ij}\in\real$, $k_{ij}=k_{ji}$, and $0<p<4/(N-2)^+$ (we use the convention $4/(N-2)^+=+\infty$, if $N=1,2$, and $4/(N-2)^+=4/(N-2)$, if $N\ge 3$). Given $1\le i\neq j\le M$, if $k_{ij}\ge 0$, one says that the coupling between the components $v_i$ and $v_j$ is attractive; if $k_{ij}< 0$, it is repulsive. The Cauchy problem for $V_0\in (H^1(\real^N))^M$ is locally well-posed and, letting $T_{max}(V_0)$ be the maximal time of existence of the solution with initial data $V_0$: if $T_{max}(V_0)<\infty$, then $\lim_{t\to T_{max}(V_0)}\|\nabla V(t)\|_2=+\infty$.

When we look for nontrivial periodic solutions of the form $V=e^{it}U$, with $U=(u_1,...,u_M)\in (H^1(\real^N))^M$ (called bound-states), we are led to the study of the system
\begin{equation}\label{BS}
 \Delta u_i - u_i + \sum_{j=1}^M k_{ij}|u_j|^{p+1}|u_i|^{p-1}u_i=0 \quad i=1,...,M.
\end{equation}

Especially relevant, for both physical and mathematical reasons, are the bound-states which have minimal action among all bound-states, the so-called ground-states. In the scalar case, one may prove that there is a unique ground-state (modulo translations and rotations). Examples of its relevance may be found, for example, in \cite{weinstein2}, \cite{merleraphael2}, \cite{cazenave}.

In the vector-valued case, very little is known. In fact, despite several results for the existence of bound-states, there are almost no results concerning ground-states and their characterization. To our knowledge, only the papers \cite{mazhao}, \cite{weiyao}, \cite{linwei} present advances in the characterization of ground-states, where the results obtained are quite specific. The approach for the first two is an analysis of the system of ODE's that one obtains after proving that all ground-states are radial functions. In the third paper, the approach is variational and offers only conditions for the existence (or nonexistence) of ground-states with all components different from zero. However, each of these results display several restrictions, both on the power $p$ and on the coefficients $k_{ij}$. 

Our approach is also variational, does not make restrictions on $p$ and is valid if the system \eqref{BS} has the following property: it is possible to group the components in such a way that two components attract each other if and only if they are in the same group. This property is verifiable in all the refered papers. Intuitively, the results tell us that the attractive components have the same profile and, if there are repulsive components, one of them has to be zero: otherwise, it would be possible to move them away from each other indefinetly and therefore lowering the action, which would contradict the minimality of the ground-state.

We call the reader's attention to theorem \ref{existencia}. A simple integration by parts shows that, if the matrix $\mathcal{K}=(k_{ij})$ is such that $X^T\mathcal{K}X\le 0$, for any $X\in \real^M$ with nonnegative components, there are no bound-states. Theorem \ref{existencia} claims that, if $\mathcal{K}$ does not satisfy this property, then there exist ground-states of (M-NLS). Therefore, this is the optimal result for the existence of ground-states of (M-NLS). The main difference regarding the known existence results is that, instead of using Schwarz symmetrization (for which one needs the positivity of the coupling coefficients), one uses the concentration-compactness principle by P.-L. Lions. Notice that this approach does not say wether there exist radial ground-states or not. However, in conjuction with the characterization theorems, we prove radiallity of ground-states in all the cases where it would be possible to use Schwarz symmetrization.

The structure of this work is as follows: in section 2, we define precisely the concepts of bound-state and ground-state and formulate the main results. The main lemma that allows the characterization in the case of attractive couplings can be set in a general framework, which we present in section 3. In section 4, we prove the main results. In section 5, we apply the results to some special cases, obtaining in particular the results of \cite{mazhao} and \cite{weiyao}. We also prove the uniqueness of ground-state in the case considered in \cite{maschulze}. Finally, in section 6, we use the characterization of ground-states to determine the optimal constant for the vector-valued Gagliardo-Nirenberg inequality and we apply the result to the study of global existence, concentration phenomena and blow-up profile for the (M-NLS) system, in the critical case $p=2/N$.

\end{section}
\begin{section}{Definitions and main results}

\begin{defi}\textit{(Bound-states and ground-states of (M-NLS))}
\begin{enumerate}

\item We define bound-state of (M-NLS) as any element $(u_1,...,u_M)\in (H^1(\real^N))^M\setminus \{0\}$ solution of \eqref{BS}
and define $A_{\footnotesize{\mbox{(M-NLS)}}}$ to be the set of all bound-states of (M-NLS).
\item A fully nontrivial bound-state is a bound-state such that $u_i\neq0, \ \forall i$. The set of such bound-states is called $A_{\footnotesize{\mbox{(M-NLS)}}}^+$.
\item Given $U=(u_1,...,u_M)\in (H^1(\real^N))^M$, set
\ben
I_M(U)=\sum_{i=1}^M \int |\nabla u_i|^2 + \int |u_i|^2, \ J_M(U)=\sum_{i,j=1}^M k_{ij}\int |u_i|^{p+1}|u_j|^{p+1}
\een
and define the action of $U$,
\ben
S_M(U)=\frac{1}{2}I_M(U) - \frac{1}{2p+2}J_M(U).
\een
\item The set of ground-states of (M-NLS) is defined as
\ben
G_{\footnotesize{\mbox{(M-NLS)}}}=\{U\in A_{\footnotesize{\mbox{(M-NLS)}}}: S_M(U)\le S_M(W),\ \forall W\in A_{\footnotesize{\mbox{(M-NLS)}}}\}\subset A_{\footnotesize{\mbox{(M-NLS)}}},
\een
and the set of fully nontrivial ground-states is
\ben
G_{\footnotesize{\mbox{(M-NLS)}}}^+=G_{\footnotesize{\mbox{(M-NLS)}}}\cap A_{\footnotesize{\mbox{(M-NLS)}}}^+.
\een
\end{enumerate}
\end{defi}

\begin{nota}
If $U\in A_{\footnotesize{\mbox{\textit{(M-NLS)}}}}$, $I_M(U)=J_M(U)$ (one multiplies the $i$-th equation by $u_i$ and integrates over $\real^N$). Therefore
\ben
S_M(U)=\left(\frac{1}{2}-\frac{1}{2p+2}\right), \quad I_M(U)=\left(\frac{1}{2}-\frac{1}{2p+2}\right)J_M(U).
\een
Hence a ground-state is a bound-state with $I_M$ (or $J_M$) minimal.
\end{nota}
\begin{nota}
Throughout this work, we shall assume that $k_{ij}$ are such that
\be\tag{P1}
\{U\in (H^1(\real^N))^M: J_M(U)>0\}\neq\emptyset.
\ee
This hypothesis is necessary for the existence of bound-states, since $J_M(U)=I_M(U)>0$, for any $U\in A_{\footnotesize{\mbox{\textit{(M-NLS)}}}}$.
\end{nota}
\begin{nota}
Since $M\ge 2$ will always be fixed, to simplify notations, we write
\ben
A:=A_{\footnotesize{\mbox{\textit{(M-NLS)}}}},\ G:=G_{\footnotesize{\textit{\mbox{(M-NLS)}}}},\ G^+:=G^+_{\footnotesize{\mbox{\textit{(M-NLS)}}}}
\een
and
\ben
I:=I_M,\ J:=J_M,\ S:=S_M.
\een
\end{nota}

The following two lemmas are  well-known results concerning ground-states for (1-NLS) (see \cite{cazenave}).

\begin{lema}\label{unicidadeQ}
There exists $Q\in H^1(\real^N)\setminus\{0\}$ radial, positive and strictly decreasing such that
\ben
G_{\footnotesize{\mbox{(1-NLS)}}}=\{e^{i\theta}Q(\cdot + y): \theta\in\real,\ y\in\real^N\}.
\een
\end{lema}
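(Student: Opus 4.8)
\noindent\emph{Proof idea.} The plan is to produce $Q$ by constrained minimization, fix its shape by symmetrization and elliptic theory, and then reduce the full characterization to the classical ODE uniqueness theorem.

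First I would realize the ground-state level variationally. Writing (1-NLS) as $\Delta u - u + |u|^{2p}u = 0$, set
\[
m = \inf\Big\{ \|\nabla u\|_2^2 + \|u\|_2^2 \ :\ u\in H^1(\real^N),\ \textstyle\int |u|^{2p+2} = 1 \Big\}.
\]
Since $0<p<4/(N-2)^+$, the Gagliardo–Nirenberg inequality gives $m>0$ and keeps minimizing sequences bounded in $H^1$. Replacing each term of a minimizing sequence by its Schwarz symmetrization preserves $\|u\|_2$ and $\int|u|^{2p+2}$ and does not increase $\|\nabla u\|_2$, so one may assume the sequence is radial, nonnegative and nonincreasing; by the compactness of the radial embedding $H^1_{\mathrm{rad}}(\real^N)\hookrightarrow L^{2p+2}(\real^N)$ it converges, along a subsequence, strongly in $L^{2p+2}$ and weakly in $H^1$ to an admissible minimizer $Q\geq 0$, radial and nonincreasing. (Alternatively the concentration–compactness argument used later in the paper applies verbatim with $M=1$.) The Euler–Lagrange equation is $-\Delta Q + Q = \lambda Q^{2p+1}$ with $\lambda>0$, and the rescaling $Q\mapsto \lambda^{1/(2p)}Q$ gives a solution of (1-NLS); the Pohozaev/Nehari identities recalled in the first Remark then show this solution has minimal action among all bound-states, i.e.\ it is a ground-state.

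Next I would pin down the qualitative properties. An elliptic bootstrap gives $Q\in C^2$, and comparison with the fundamental solution of $-\Delta + 1$ yields exponential decay of $Q$ and $\nabla Q$. As $Q\geq 0$ is a nontrivial solution of $-\Delta Q + Q = Q^{2p+1}$, the strong maximum principle forces $Q>0$ everywhere, and the Gidas–Ni–Nirenberg moving-plane theorem gives that $Q$ is, up to a translation, radially symmetric about the origin and strictly decreasing in $|x|$.

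The main obstacle is uniqueness. The decisive input is that the positive, decaying, radial solution of $u'' + \tfrac{N-1}{r}u' - u + u^{2p+1} = 0$, $u'(0)=0$, is unique — the Coffman–McLeod–Serrin–Kwong theorem, whose proof is a delicate shooting and Sturmian comparison for the linearized equation; I would cite it from \cite{cazenave}. Granting it, let $u$ be an arbitrary ground-state. Since $I_M(u)=J_M(u)$, a scaling turns $u$ into a minimizer for $m$; because $|u|$ is admissible with $\||u|\|_2=\|u\|_2$, $\int ||u||^{2p+2}=\int|u|^{2p+2}$ and $\|\nabla|u|\|_2\leq\|\nabla u\|_2$ (diamagnetic inequality), $|u|$ is also a minimizer, which forces equality in the diamagnetic inequality and hence $u=e^{i\theta}|u|$ for a constant $\theta\in\real$. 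Then $w:=|u|\geq 0$ is a ground-state solving the equation, so $w>0$ by the maximum principle, and by the moving-plane theorem $w$ is radial about some $y\in\real^N$ and decreasing; the uniqueness theorem gives $w=Q(\cdot+y)$, whence $u=e^{i\theta}Q(\cdot+y)$. The reverse inclusion is immediate from the invariance of $S_M$ under translations and phase rotations, which finishes the proof.
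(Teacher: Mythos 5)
The paper does not prove this lemma at all: it is stated as a classical fact and cited from \cite{cazenave}, so there is no internal proof to compare yours against. Your outline follows the standard route and is essentially correct: constrained minimization of $I_1$ on $\{\int|u|^{2p+2}=1\}$, Schwarz symmetrization together with compactness of the radial nonincreasing embedding (or, as you note, the concentration--compactness argument of Section 4 with $M=1$) to extract a nonnegative radial minimizer, elliptic regularity and the strong maximum principle for positivity, Gidas--Ni--Nirenberg for radial symmetry and strict monotonicity, the Coffman--Kwong ODE uniqueness theorem, and the diamagnetic inequality to reduce an arbitrary complex ground-state to a positive one. Two remarks. First, the genuinely difficult step --- uniqueness of the positive decaying radial solution of $u''+\tfrac{N-1}{r}u'-u+u^{2p+1}=0$ --- is still only cited, so your proposal is a reduction of the lemma to Kwong's theorem rather than a self-contained proof; this is the same move the paper makes by citing the entire statement, and is entirely reasonable. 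Second, the phase argument at the end should be reordered: equality in $\|\nabla|u|\|_2\le\|\nabla u\|_2$ only gives a locally constant phase on $\{|u|>0\}$, so you should first observe that $|u|$ is itself a (rescaled) nonnegative minimizer, hence a solution that is strictly positive everywhere by the maximum principle, and only then conclude that $\theta$ is a single global constant. For $N=1$ the Strauss compactness lemma fails, but the pointwise decay of nonincreasing rearrangements, or your concentration--compactness alternative, still yields the needed $L^{2p+2}$ compactness. With these adjustments the sketch is complete at the level of detail one would expect for a cited classical result.
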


\begin{lema}\label{caracterizacaoQ}
$G_{(1-NLS)}$ is the set of solutions of the minimization problem
\ben
I_1(u)=\min_{J_1(w)=J_1(Q)} I_1(w),\quad J_1(u)=J_1(Q).
\een
\end{lema}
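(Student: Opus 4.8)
The plan is to show the two minimization problems have the same solution set by a double inclusion, using Lemma \ref{unicidadeQ} and the Pohozaev/Nehari identities recorded in the Remark. First, note that any $u \in G_{(1\text{-NLS})}$ is of the form $e^{i\theta}Q(\cdot + y)$, so $I_1(u) = I_1(Q)$ and $J_1(u) = J_1(Q)$; thus it only remains to check that $I_1(Q) \le I_1(w)$ whenever $J_1(w) = J_1(Q)$, i.e.\ that $Q$ is a minimizer. For the converse, I would show any solution $u$ of the minimization problem is (up to phase and translation) equal to $Q$, hence lies in $G_{(1\text{-NLS})}$.

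The key device for the minimization step is scaling. Given $w \in H^1(\real^N)$ with $J_1(w) > 0$, consider the rescaled family $w_\lambda = w(\cdot/\lambda)$ (or a two-parameter rescaling $\mu w(\cdot/\lambda)$) and observe that $S_1(w_\lambda)$, as a function of $\lambda > 0$, is a smooth function that tends to $-\infty$ as $\lambda \to \infty$ (when $N \ge 3$, or with the appropriate modification for $N=1,2$) and is positive for small $\lambda$, hence attains an interior maximum at some $\lambda_0$; at that maximum $w_{\lambda_0}$ satisfies the Nehari constraint $I_1 = J_1$ up to the dilation identity, and one extracts that $\max_\lambda S_1(w_\lambda)$ controls $I_1(w)^{\text{(some power)}}/J_1(w)^{\text{(some power)}}$. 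Comparing with the same quantity for $Q$ — which is itself a bound-state, hence a critical point of these rescalings — and using that ground-states minimize the action among bound-states, one concludes that under the normalization $J_1(w) = J_1(Q)$ one has $I_1(w) \ge I_1(Q)$, with equality forcing $w$ to be (after the optimal rescaling, which must then be trivial) a ground-state.

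For the reverse inclusion, suppose $u$ solves the minimization problem. Using a Lagrange multiplier argument: $u$ is a critical point of $I_1$ subject to $J_1 = \text{const}$, so there is $\nu \in \real$ with $-\Delta u + u = \nu |u|^{2p} u$ in the weak sense (here I am using that the nonlinearity $J_1(w) = \int |w|^{2p+2}$ in the scalar case, matching the $M=1$ specialization of $J_M$). Pairing with $u$ gives $I_1(u) = \nu J_1(u)$, and the Pohozaev identity gives a second relation between $I_1(u)$, $J_1(u)$ and $\nu$; together with $J_1(u) = J_1(Q) > 0$ these pin down $\nu > 0$, and after the rescaling $u(x) \mapsto \nu^{-1/(2p)} u(x)$ we land on a genuine solution of (1-NLS), i.e.\ a bound-state, whose action equals $S_1(Q)$ by the minimality just established — hence a ground-state, hence $\nu = 1$ and $u = e^{i\theta}Q(\cdot+y)$ by Lemma \ref{unicidadeQ}.

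The main obstacle I anticipate is the equality-case analysis: showing that $I_1(w) = I_1(Q)$ under $J_1(w) = J_1(Q)$ forces $w$ itself (not merely some rescaling of $w$) to be a ground-state. This requires checking that the optimal dilation parameter is exactly $1$ precisely because both the Nehari identity $I_1 = J_1$ and the Pohozaev identity hold simultaneously for a bound-state and for $w$ at its optimal scaling, so the two normalizations are compatible only at $\lambda_0 = 1$. The rest is bookkeeping with the two identities in the Remark and an invocation of Lemma \ref{unicidadeQ}.
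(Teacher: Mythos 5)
The paper does not actually prove this lemma: it is quoted as a known scalar result from \cite{cazenave}, and the closest in-house argument is the one carried out for the $M$-component analogue in Lemma \ref{caracterizacao}. Your reverse inclusion (constrained minimizer $\Rightarrow$ ground-state) is sound and matches that template: Lagrange multiplier, pairing with $u$ to get $I_1(u)=\nu\,J_1(Q)$ up to constants, the observation that $Q$ is admissible so $I_1(u)\le I_1(Q)=J_1(Q)$, rescaling to a genuine bound-state, and comparison of actions via homogeneity. (Two small corrections: the rescaling that turns $-\Delta u+u=\nu|u|^{2p}u$ into a bound-state is $u\mapsto \nu^{1/(2p)}u$, not $\nu^{-1/(2p)}u$; and the Remark in Section 2 records only the Nehari identity $I_1=J_1$, not Pohozaev --- Pohozaev is neither stated there nor needed here.)

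The genuine gap is in the forward inclusion, and it is twofold. First, the scaling you propose, $w_\lambda=w(\cdot/\lambda)$, gives $S_1(w_\lambda)=\tfrac12\lambda^{N-2}\|\nabla w\|_2^2+\lambda^N\bigl(\tfrac12\|w\|_2^2-\tfrac{1}{2p+2}J_1(w)\bigr)$, which tends to $-\infty$ only when $\tfrac12\|w\|_2^2<\tfrac{1}{2p+2}J_1(w)$; for a general competitor this fails, so the mountain-pass picture along dilations need not exist. The ray that works is the multiplicative one, $t\mapsto tw$, along which $\max_{t>0}S_1(tw)=\bigl(\tfrac12-\tfrac{1}{2p+2}\bigr)I_1(w)^{(p+1)/p}J_1(w)^{-1/p}$ by pure homogeneity. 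Second, and more seriously, even with the correct ray your comparison requires the identity $S_1(Q)=\inf\{\max_{t>0}S_1(tw):w\neq0\}$, i.e.\ the Nehari/mountain-pass characterization of the ground-state level --- which is exactly as hard as the lemma itself and does not follow from Lemma \ref{unicidadeQ} alone, since $G_{(1\text{-}NLS)}$ minimizes the action only over \emph{bound-states} and the point $t_0w$ on the Nehari manifold need not be a bound-state. The standard repair, and the route the paper takes in the $M$-component case (Theorem \ref{existencia} plus Lemma \ref{caracterizacao}), is to first prove that the constrained infimum is \emph{attained} (by Schwarz symmetrization, which is painless for $M=1$, or by concentration-compactness), then run your reverse-inclusion argument on that minimizer to conclude it is a ground-state; this pins the infimum at $I_1(Q)$ and makes every ground-state a minimizer. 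Without an attainment step the forward inclusion is unproved --- a priori the infimum could be strictly below $I_1(Q)$ and unattained --- and your proposal as written does not supply one.
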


We now state the main results of this paper.
\begin{teo}\label{existencia}
Under assumption (P1), $G\neq \emptyset$.
\end{teo}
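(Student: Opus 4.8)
The statement is that under (P1), $G \neq \emptyset$. The natural strategy, as the introduction announces, is the concentration-compactness method of P.-L. Lions applied to the constrained minimization problem suggested by the second Remark: minimize $I(U)$ over the manifold $\{U \in (H^1(\real^N))^M : J(U) = \lambda\}$ for a suitable fixed $\lambda > 0$, or equivalently (by scaling) study the quantity
\ben
m = \inf\Big\{ I(U) : U \in (H^1(\real^N))^M,\ J(U) = 1 \Big\}.
\een
Assumption (P1) guarantees this set is nonempty, and the Gagliardo–Nirenberg inequality (in its vector-valued form, which follows from the scalar one applied componentwise together with Hölder on the cross terms $\int |u_i|^{p+1}|u_j|^{p+1}$) gives $J(U) \le C\, I(U)^{p+1}$ on bounded sets of $(H^1)^M$, whence $m > 0$.

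The plan is then to take a minimizing sequence $U_n$ with $J(U_n) = 1$ and $I(U_n) \to m$; this sequence is bounded in $(H^1)^M$. I would apply the concentration-compactness lemma to the sequence of measures $\rho_n = \sum_{i=1}^M (|\nabla u_{i,n}|^2 + |u_{i,n}|^2)$. The three alternatives must be excluded or exploited: \emph{vanishing} is ruled out because if $\sup_y \int_{B(y,R)} \rho_n \to 0$ then a standard lemma forces $J(U_n) \to 0$, contradicting $J(U_n)=1$; \emph{dichotomy} is ruled out by the strict subadditivity of the map $\lambda \mapsto m(\lambda) = \inf\{I(U) : J(U) = \lambda\}$, which by the scaling $U \mapsto U(\cdot/\sigma)$ satisfies $m(\lambda) = \lambda^{?}\, m(1)$ with a strictly sublinear exponent (here one uses $0 < p < 4/(N-2)^+$), so splitting mass strictly increases the infimum. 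Hence \emph{compactness} holds: up to translation, $U_n \to U$ strongly in $(L^{2p+2})^M$ and weakly in $(H^1)^M$, so $J(U) = 1$ and $I(U) \le m$, forcing $I(U) = m$ by weak lower semicontinuity. Then $U$ is a minimizer.

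Finally I would pass from the minimizer $U$ to an actual bound-state and check it is a ground-state. The Euler–Lagrange equations for the constrained problem give $\Delta u_i - u_i + \mu \sum_j k_{ij}|u_j|^{p+1}|u_i|^{p-1}u_i = 0$ for a Lagrange multiplier $\mu$; testing with $U$ and using $I(U) = m$, $J(U) = 1$ shows $\mu = m/(p+1) > 0$, and a rescaling $U \mapsto c\, U$ with an appropriate $c>0$ absorbs $\mu$ and produces a genuine solution $\widetilde U$ of \eqref{BS}, i.e.\ $\widetilde U \in A$. To see $\widetilde U \in G$: any $W \in A$ satisfies $I(W) = J(W) > 0$, so the normalization $W/J(W)^{1/(2p+2)}$ lies on the constraint manifold and hence $I(W)/J(W)^{1/(p+1)} \ge m$; combined with $I(W) = J(W)$ this yields $I(W) \ge m^{(p+1)/p}$ with equality for $\widetilde U$, and since by the first Remark the action is a fixed positive multiple of $I$ on $A$, $\widetilde U$ minimizes $S$ over $A$. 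Thus $G \neq \emptyset$.

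The main obstacle is the dichotomy step: one must set up the scaling relation for $m(\lambda)$ correctly and verify \emph{strict} subadditivity, $m(\alpha) + m(\lambda - \alpha) > m(\lambda)$ for $0 < \alpha < \lambda$, which is exactly where the restriction $p < 4/(N-2)^+$ enters and which requires care because $J$ contains the indefinite cross terms $k_{ij}$ with possibly negative coefficients — one should check the subadditivity argument does not secretly assume $J \ge 0$. A secondary technical point is justifying that the concentration-compactness translations can be taken uniform across all $M$ components simultaneously, which is handled by working with the single scalar measure $\rho_n$ above.
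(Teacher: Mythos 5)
Your overall strategy coincides with the paper's: both reduce the theorem to the constrained minimization of $I$ on a level set of $J$ (the paper's problem \eqref{minlambdaG}), both identify the minimizers with ground-states via the Lagrange multiplier computation and the rescaling of an arbitrary bound-state onto the constraint set (this is exactly Lemma \ref{caracterizacao}), and both run concentration-compactness with strict subadditivity of $\lambda\mapsto\lambda^{1/(p+1)}$ supplying the exclusion of splitting. Your choice to apply the concentration-compactness lemma to the single scalar measure $\rho_n=\sum_i(|\nabla u_{i,n}|^2+|u_{i,n}|^2)$ is a legitimate (and arguably cleaner) variant of the paper's component-by-component bubble decomposition followed by clustering; it automatically gives the uniform translations you worry about at the end.

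There is, however, one genuine gap, and you have located it yourself without closing it: the exclusion of dichotomy when $J$ is indefinite. If the sequence splits as $U_n\approx U_n^1+U_n^2$ with separating supports, the standard argument assumes $J(U_n^1)\to\alpha$ and $J(U_n^2)\to 1-\alpha$ with $0<\alpha<1$ and invokes $m(\alpha)+m(1-\alpha)>m(1)$. But since some $k_{ij}$ (including diagonal entries) may be negative, nothing prevents $\alpha\le 0$ or $\alpha\ge 1$, and then $m(\alpha)$ is not even defined by your scaling relation. Saying ``one should check the subadditivity argument does not secretly assume $J\ge 0$'' names the obstacle but does not remove it, and as written the proof is incomplete at precisely the step you call the main one. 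The paper's proof devotes its Case 2 to this: when some bubble has $\lambda_l=\lim J(U_n^l)<0$, one discards nothing but instead rescales the bubbles with positive $\lambda_l$ by factors chosen so that the rescaled values $\eta_l$ still sum to the full constraint level, and then the strict concavity of $\lambda\mapsto \lambda^{1/(p+1)}I^1$ applied to the $\eta_l$ yields the same contradiction as in the sign-definite case. An equivalent fix in your two-piece setting: if $J(U_n^1)\to\alpha\le 0$, rescale $U_n^2$ down by $c_n\le 1$ so that $J(c_nU_n^2)=1$; then $I(c_nU_n^2)\le I(U_n^2)\le I(U_n)-I(U_n^1)+o(1)$, and the dichotomy alternative guarantees $I(U_n^1)\ge\theta>0$, giving $m\le m-\theta$, a contradiction. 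Either version must be written out for the proof to stand.
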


\begin{teo}\label{atractivoM}
Suppose (P1) and that $k_{ij}\ge 0,\ \forall i\neq j$. Then $U^0\in G^+$ if and only if there exist $\theta_i\in\real$, $i=1,...,M$, and $y\in\real^N$ such that
\ben\label{formula}
U^0=(a_ie^{i\theta_i}Q(\cdot+y))_{1\le i\le M}
\een
where
\ben
(a_1,...,a_M)\in S^+=\left\{B=(b_1,...,b_M)\in(\real^+)^M: \sum_{j=1}^M k_{ij}b_i^{p-1}b_j^{p+1}=1, i=1,...,M\right\}
\een
and
\ben\label{mina_0b_0}
\sum_{i=1}^M a_i^2I_1(Q) = \min\left\{\min_{B\in S^+}\left\{ \sum_{i=1}^M b_i^2I_1(Q) \right\}, \min_{U\in G\setminus G^+} I(U)\right\}.
\een
\end{teo}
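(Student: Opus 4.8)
The plan is to pass from $G^+$ to a constrained minimisation, to reduce the resulting \emph{vectorial} problem to the scalar one of Lemma \ref{caracterizacaoQ} by a sharp-constant argument, and then to unfold the ensuing finite-dimensional optimisation. Set $\mathcal{N}=\{U\in(H^1(\real^N))^M\setminus\{0\}:I(U)=J(U)\}$; by (P1) and the rescaling $U\mapsto tU$ one has $\mathcal{N}\neq\emptyset$, and $J>0$ on $\mathcal{N}$. Every bound-state lies on $\mathcal{N}$ (since $I=J$ on $A$), and the converse inclusion for minimisers follows from a Lagrange-multiplier computation: if $U^*$ minimises $I$ over $\mathcal{N}$, testing $dI(U^*)=\lambda\,d(I-J)(U^*)$ against $U^*$ forces $\lambda=-1/p$, whence $dI(U^*)=\frac{1}{p+1}dJ(U^*)$, i.e.\ $dS(U^*)=0$, so $U^*$ is a bound-state. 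Consequently $\min_{\mathcal N}I=\min_A I$, which is attained by Theorem \ref{existencia}, and $G=\{U\in\mathcal{N}:I(U)=\min_{\mathcal N}I\}$. Hence $U^0\in G^+$ if and only if $U^0\in\mathcal{N}$, all components of $U^0$ are nonzero, and $I(U^0)=\min_{\mathcal N}I$.

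Next comes the reduction to the scalar problem, which I expect to be the crux and which is the purpose of the general lemma of Section 3. Let $U$ have all components nonzero and put $r_i=\|u_i\|_{2p+2}$. Since $k_{ij}\ge0$ for $i\neq j$, Cauchy--Schwarz gives $J(U)\le\sum_{i,j}k_{ij}r_i^{p+1}r_j^{p+1}$, with equality iff $|u_i|$ and $|u_j|$ are proportional whenever $k_{ij}>0$. Lemma \ref{caracterizacaoQ}, combined with the scalar identity $\int Q^{2p+2}=I_1(Q)$, rephrases as the sharp inequality $\int|w|^{2p+2}\le I_1(w)^{p+1}/I_1(Q)^{p}$ for all $w\neq0$, with equality iff $w=ce^{i\theta}Q(\cdot+y)$; applying it to each $u_i$ gives $I(U)^{p+1}\ge I_1(Q)^{p}\big(\sum_i r_i^{2}\big)^{p+1}$. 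Combining,
\be
\frac{I(U)^{p+1}}{J(U)}\ \ge\ I_1(Q)^{p}\,\Phi(r),\qquad \Phi(r):=\frac{\big(\sum_i r_i^{2}\big)^{p+1}}{\sum_{i,j}k_{ij}r_i^{p+1}r_j^{p+1}},
\ee
with equality iff each $u_i=c_ie^{i\theta_i}Q(\cdot+y_i)$ and $y_i=y_j$ whenever $k_{ij}>0$ (the latter because $Q$ is radial and strictly decreasing). On $\mathcal{N}$ this reads $I(W)\ge I_1(Q)\,\Phi(r)^{1/p}\ge I_1(Q)\big(\inf_{r_i>0}\Phi\big)^{1/p}$.

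Because $\Phi$ is $0$-homogeneous, $\inf_{r_i>0}\Phi^{1/p}$ equals $\inf\{\sum_i r_i^{2}:r_i>0,\ \sum_{i,j}k_{ij}r_i^{p+1}r_j^{p+1}=1\}$. A Lagrange multiplier computation shows that at an interior critical point $\sum_j k_{ij}r_i^{p-1}r_j^{p+1}$ is independent of $i$, so a single rescaling sends $r$ into $S^+$; conversely, any $B\in S^+$ satisfies $\sum_{i,j}k_{ij}b_i^{p+1}b_j^{p+1}=\sum_i b_i^{2}$ (multiply the defining relations by $b_i^2$ and sum), hence $\Phi(B)^{1/p}=\sum_i b_i^{2}$. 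Thus when this infimum is attained it equals $\min_{B\in S^+}\sum_i b_i^{2}$; when it is not, it is approached along $r_k\to0$, i.e.\ through a proper sub-system. Performing the same analysis on configurations with at least one vanishing component --- which are exactly the bound-states of sub-systems, the ground-states among them being $G\setminus G^+$ --- one obtains
\be
\min_{\mathcal N}I=\min\left\{\,I_1(Q)\min_{B\in S^+}\sum_{i=1}^{M}b_i^{2}\,,\ \ \min_{U\in G\setminus G^+}I(U)\,\right\},
\ee
with the convention that a minimum over an empty set is $+\infty$.

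Finally, the two implications. If $U^0\in G^+$, it achieves $\min_{\mathcal N}I$ among fully nontrivial elements of $\mathcal{N}$, so equality holds throughout the reduction: $u_i^0=c_ie^{i\theta_i}Q(\cdot+y_i)$ with $y_i=y_j$ along every pair with $k_{ij}>0$. Moreover the graph $\{(i,j):k_{ij}>0\}$ must be connected, for otherwise $U^0$ would split into $\ge2$ mutually decoupled blocks, each a bound-state of its sub-system of positive action, and retaining only the cheapest block would give a bound-state of strictly smaller action, contradicting $U^0\in G$; connectedness then forces a single centre $y$, so $U^0=(a_ie^{i\theta_i}Q(\cdot+y))_{1\le i\le M}$ with $a=(c_i)\in S^+$ (substitute into \eqref{BS}) and $\sum_i a_i^{2}I_1(Q)=I(U^0)=\min_{\mathcal N}I$, which is \eqref{mina_0b_0}. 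Conversely, if $U^0=(a_ie^{i\theta_i}Q(\cdot+y))_{1\le i\le M}$ with $a\in S^+$, direct substitution shows $U^0$ is a fully nontrivial bound-state with $I(U^0)=\sum_i a_i^{2}I_1(Q)$; by \eqref{mina_0b_0} this equals $\min_{\mathcal N}I$, so $U^0\in G$ and, being fully nontrivial, $U^0\in G^+$. The main obstacle is the reduction step --- the two sharp inequalities and, above all, the precise description of their equality cases in the vectorial setting --- together with the bookkeeping of the previous paragraph, notably matching the ``boundary/sub-system'' alternative with $G\setminus G^+$ and treating the case where the relevant infimum is not attained.
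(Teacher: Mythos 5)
Your argument is correct in substance but reaches the characterization by a recognizably different route. The paper works with the constrained problem $\min\{I(W):J(W)=\lambda_G\}$ (Lemmas \ref{igual=maior} and \ref{caracterizacao}) and then invokes the replacement argument of Lemma \ref{abstracto}: if some component is not a multiple of a scalar minimizer, substituting for every $u_i$ a copy of a fixed $R\in\mathcal{M}$ scaled to preserve $J_1(u_i)$ strictly lowers $I$ without lowering $J$ (this is where $C(w,z)\le J_1(w)^{1/2}J_1(z)^{1/2}$ and its equality case enter), a contradiction. You use the same two ingredients --- scalar sharpness of $Q$ and Cauchy--Schwarz on the cross terms --- but package them as the quantitative chain $I(U)^{p+1}/J(U)\ge I_1(Q)^p\Phi(r)$ with tracked equality cases; this buys you the finite-dimensional functional $\Phi$ whose interior critical points rescale into $S^+$, so condition \eqref{mina_0b_0} drops out of the same computation rather than being verified separately as in the paper's Step 3. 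Your connectedness argument for the centres is in fact more careful than the paper's Step 1, which gets strict decrease of $J$ under a mismatch $y_{i_0}\neq y_{j_0}$ only when $k_{i_0j_0}>0$; you correctly note that a disconnected positive-coupling graph forces $G^+=\emptyset$ anyway.

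Two local points need repair. First, the identity ``$\inf_{r>0}\Phi^{1/p}=\inf\{\sum_i r_i^2:\sum_{i,j}k_{ij}r_i^{p+1}r_j^{p+1}=1\}$'' is false as stated (take $M=1$, $k_{11}=2$: the left side is $2^{-1/p}$, the right side $2^{-1/(p+1)}$); what you actually need, and what your Lagrange computation does give, is only that interior critical points of $\Phi$ rescale into $S^+$, where $\Phi^{1/p}(B)=\sum_i b_i^2$. Second, $\min_{\mathcal N}I=\min_A I$ does not follow from the multiplier computation alone, which presupposes attainment on $\mathcal N$; it follows from the homogeneity rescaling as in Lemma \ref{igual=maior}. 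Likewise, the displayed formula for $\min_{\mathcal N}I$ is cleaner obtained a posteriori than from the sketched sub-system analysis: a ground state exists by Theorem \ref{existencia}, and whichever of $G^+$ or $G\setminus G^+$ it lies in, your forward implication identifies its $I$-value with the corresponding term of the minimum, while every $B\in S^+$ and every element of $G\setminus G^+$ supplies a competing bound state. With these repairs the proof closes.
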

\begin{nota}
Even though the result only characterize, \textit{a priori}, the elements of $G^+$, one may obtain the description of $G$. Simply notice that, if $U^0\in G\setminus G^+$, then
$U^0$ has $L$ nonzero components, with $1\le L<M$. If $(U^0)^+$ is the vector formed by such components, $(U^0)^+$ has to be a ground-state of a (L-NLS) system. By theorem \ref{atractivoM} applied with $M=L$, we find the explicit expression of $(U^0)^+$ and therefore of $U^0$.
\end{nota}

\begin{teo}\label{repulsivoM}
Suppose (P1) and that there exists a partition $\{Y_k\}_{1\le k\le K}$ of $\{1,...,M\}$ such that, given $1\le i\neq j\le M$,
\ben
k_{ij} \ge 0 \mbox{ if and only if } \exists k: i,j\in Y_k.
\een
Then, if $U^0=(u_1^0,...,u^0_M)\in G$, there exists $k\in\{1,...,K\}$ such that $u^0_i=0, \forall i\notin Y_{k}$.
\end{teo}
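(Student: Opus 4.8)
The plan is to argue by contradiction, exploiting the fact that repulsive couplings can always be ``weakened'' by translating groups of components away from each other, thereby strictly lowering the action and contradicting minimality. Suppose $U^0=(u^0_1,\dots,u^0_M)\in G$ is such that, for every $k\in\{1,\dots,K\}$, there is some index $i\notin Y_k$ with $u^0_i\neq 0$. In particular, at least two of the groups, say $Y_1$ and $Y_2$, each contain a nonzero component of $U^0$. Split the index set as $\mathcal{A}=\bigcup_{k\ge 2}Y_k$ and $\mathcal{B}=Y_1$, so that $k_{ij}<0$ whenever $i\in\mathcal{A}$ and $j\in\mathcal{B}$ (by the defining property of the partition, since $i,j$ lie in different groups). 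For $\tau\in\real^N$, define the translated competitor
\ben
U^\tau=(u^\tau_i)_{1\le i\le M},\qquad u^\tau_i=\begin{cases} u^0_i(\cdot) & i\in\mathcal{B},\\ u^0_i(\cdot-\tau) & i\in\mathcal{A}.\end{cases}
\een
Translation invariance of $H^1$ norms gives $I(U^\tau)=I(U^0)$ for all $\tau$. For the nonlinear part, the terms with $i,j$ both in $\mathcal{A}$ or both in $\mathcal{B}$ are unchanged, while each mixed term $k_{ij}\int |u^\tau_i|^{p+1}|u^\tau_j|^{p+1}$ with $i\in\mathcal{A}$, $j\in\mathcal{B}$ equals $k_{ij}\int |u^0_i(x-\tau)|^{p+1}|u^0_j(x)|^{p+1}\,dx$, which tends to $0$ as $|\tau|\to\infty$ (the supports, in an $L^{p+1}$ sense, separate). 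Since $k_{ij}<0$ and the original mixed integrals $\int|u^0_i|^{p+1}|u^0_j|^{p+1}$ are nonnegative, we get $\liminf_{|\tau|\to\infty}J(U^\tau)\ge J(U^0)$, with strict inequality as soon as at least one original mixed term is strictly positive; but this positivity follows precisely from the assumption that both $\mathcal{A}$ and $\mathcal{B}$ carry a nonzero component, so $J(U^0)<J(U^\tau)$ for $|\tau|$ large.

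Now I need to convert a competitor $U^\tau$ with $I(U^\tau)=I(U^0)$ and $J(U^\tau)>J(U^0)=I(U^0)$ into an actual bound-state with strictly smaller action. This is the standard scaling argument: for $\lambda>0$ consider $\lambda U^\tau$; then $I(\lambda U^\tau)=\lambda^2 I(U^\tau)$ and $J(\lambda U^\tau)=\lambda^{2p+2}J(U^\tau)$, so one can pick $\lambda=\lambda(\tau)<1$ making $I(\lambda U^\tau)=J(\lambda U^\tau)$, i.e.\ $\lambda^{2p}=I(U^\tau)/J(U^\tau)<1$. Rescaling in this way a fixed $U^\tau$ with $J(U^\tau)>I(U^\tau)$ one builds, via the constrained minimization $\min\{I(W): J(W)=J(\lambda U^\tau)\}$ together with a Lagrange-multiplier argument (exactly as in Lemma \ref{caracterizacaoQ} and as used to prove Theorem \ref{existencia}), a genuine bound-state $\widetilde W\in A$ with $I(\widetilde W)\le I(\lambda U^\tau)=\lambda^2 I(U^0)<I(U^0)$. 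By the Remark identifying ground-states as bound-states with minimal $I$, this contradicts $U^0\in G$. Hence some group $Y_k$ must contain all the nonzero components, which is the assertion.

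The main obstacle I anticipate is the asymptotic decoupling of the mixed nonlinear terms: one must justify rigorously that $\int |u^0_i(x-\tau)|^{p+1}|u^0_j(x)|^{p+1}\,dx\to 0$ as $|\tau|\to\infty$ for $H^1$ functions, and — more delicately — one must ensure that the gain is \emph{uniform} enough to beat the machinery producing $\widetilde W$; in practice it suffices to fix one large $\tau_0$ with $J(U^{\tau_0})>I(U^{\tau_0})=I(U^0)$ and work with that single competitor, so the decoupling is only needed qualitatively (density of compactly supported functions plus dominated convergence, or a direct splitting of the integration domain). A secondary point requiring care is that the minimization problem $\min\{I(W):J(W)=c\}$ with $c=J(U^{\tau_0})$ does admit a solver that is a bound-state: this is exactly the content already available from the proof of Theorem \ref{existencia} (concentration-compactness), so it may be invoked rather than re-proved. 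Everything else is routine scaling and translation-invariance bookkeeping.
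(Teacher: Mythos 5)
Your translation step reproduces the first half of the paper's argument (the paper translates the whole group containing $j_0$ by $Re_1$ and rescales to contradict the minimality of a solver of the constrained problem), and that part is sound. The difficulty is in the sentence ``this positivity follows precisely from the assumption that both $\mathcal{A}$ and $\mathcal{B}$ carry a nonzero component.'' It does not: two nonzero $H^1$ functions can have essentially disjoint supports, in which case every mixed integral $\int |u^0_i|^{p+1}|u^0_j|^{p+1}$ vanishes, $J(U^\tau)=J(U^0)$ for all $\tau$, and your argument produces no contradiction. To force the supports to overlap you would need a unique-continuation/nodal-set argument for the components of a bound-state, which is not stated, and which is genuinely delicate here because for $p<1$ the effective potential $\sum_j k_{ij}|u_j|^{p+1}|u_i|^{p-1}$ is singular on the zero set of $u_i$.

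More importantly, even granting that all inter-group terms vanish, the theorem is not yet proved; this is exactly where the paper's real work lies and it is entirely missing from your proposal. The paper first shows that a minimizer may be assumed to satisfy $C(u_i,u_j)=0$ for $i\not\sim j$ (your step), and then, in a second and independent argument, compares such a ``split'' configuration $Z$ against a competitor $\mathcal{Q}$ supported in a single group $Y_{k_0}$, built by rescaling a ground-state of the $|Y_{k_0}|$-component subsystem. The key inequality is
\begin{equation*}
\Bigl(\sum_{k\in K^+} t_k^{\,p+1}\Bigr)^{\frac{1}{p+1}}\le \sum_{k\in K^+} t_k,
\end{equation*}
with equality if and only if at most one $t_k$ is nonzero; equivalently, the strict concavity of $\lambda\mapsto I^\lambda=\lambda^{1/(p+1)}I^1$ penalizes splitting the constraint $J=\lambda_G$ among several groups. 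Without this comparison you cannot exclude a ground-state whose nonzero components are distributed over two groups with zero cross-interaction, so the proof as written establishes only a weaker statement (the vanishing of the inter-group interaction terms), not the conclusion of Theorem \ref{repulsivoM}.
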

\begin{nota}
In the conditions of theorem \ref{repulsivoM}, we can also characterize the set $G$, since the vector of the nonzero components of a given ground-state of (M-NLS) is a ground-state for a (L-NLS) system, with $L<M$, where all the coupling coefficients are nonnegative. Therefore it is possible to apply theorem \ref{atractivoM} to (L-NLS), and thus obtaining the description of the initial ground-state.
\end{nota}

\begin{nota}
One may also consider solutions of (M-NLS) of the form $V(t)=(e^{i\omega_it}Q_i)_{1\le i\le M}$, $\omega_i>0$, and define bound-states and ground-states by making the appropriate changes. Our results of existence of ground-states can be easily extended to such a case, since one still has the homogeneity property for the functional $I$. The characterization results only extend to the simple case $\omega_i=\omega$, since lemma \ref{abstracto} requires that $I$ is the sum of several $I_1$'s (and not just a linear combination of them).
\end{nota}

\end{section}

\begin{section}{A general lemma}
Given a real vector space $X$, consider operators $I_1,J_1:X\to\real$ and $C:X\times X\to\real$ such that
\newline

(H1) $I_1$ is homogeneous of degree $\alpha>0$;
\newline

(H2) $J_1$ is homogeneous of degree $2\beta>0$ and $J_1(w)>0$ if $w\neq0$;
\newline

(H3) $C(\eta w,\xi w)=\eta^\beta \xi^\beta J_1(w)$ and $C(w,z)\le J_1(w)^{1/2}J_1(z)^{1/2}$, $\forall\ w,z\in X$ $\forall\ \eta,\xi>0$.
\newline

Given $c_{ij}\in \real,\ 1\le i,j\le M$, with $c_{ij}\ge 0$ if $i\neq j$, we define $$I(U):=\sum_{i=1}^M I_1(u_i) \mbox{ and } J(U) := \sum_{i,j=1}^M c_{ij}C(u_i,u_j).$$ 

\begin{lema}\label{abstracto}
Fix $\gamma>0$. Suppose that the family $\mathcal{M}\subset X$ of solutions of the minimization problem
\ben
I_1(u)=\min_{J_1(w)=\gamma} I_1(w),\quad J_1(u)=\gamma
\een
is nonempty and that $U=(u_1,...,u_M)\in (X\setminus\{0\})^M$ is a solution of the minimization problem
\ben
I(W)=\min_{J(V)\ge J(U)} I(V),\quad J(W)\ge J(U).
\een
Then there exist $d_i>0$ and $P_i\in\mathcal{M}$ such that $U=(d_iP_i)_{1\le i\le M}$.
\end{lema}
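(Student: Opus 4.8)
The plan is to reduce the $M$-component minimization to $M$ scalar minimizations, one per component, by exploiting the fact that $I$ splits as a sum of $I_1$'s and that $C$ is dominated by the geometric mean of $J_1$'s (hypothesis (H3)). First I would reparametrize each nonzero component as $u_i = \eta_i w_i$ with $J_1(w_i) = \gamma$ and $\eta_i > 0$; this is possible by (H2) since $J_1(w_i) > 0$ and $J_1$ is homogeneous of degree $2\beta$. Then $I_1(u_i) = \eta_i^{\alpha/\beta} I_1(w_i)$ and, using (H3), $C(u_i, u_j) \le (\eta_i\eta_j)^\beta \sqrt{J_1(w_i)J_1(w_j)} = (\eta_i\eta_j)^\beta \gamma$, with equality when $w_i = w_j$. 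So for fixed scaling parameters $\eta_i$, the value of $J(U)$ is maximized (hence the constraint $J(W) \ge J(U)$ is easiest to satisfy and $I$ can be driven lowest) precisely when all the $w_i$ coincide with a single $w$, and then $J(U) = \left(\sum_{i,j} c_{ij}\eta_i^\beta\eta_j^\beta\right)\gamma$. This is the heart of the argument: the minimizer must have all its components proportional.

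More carefully, I would argue by contradiction or by direct comparison. Suppose $U = (u_i)$ is a minimizer with $u_i = \eta_i w_i$, $J_1(w_i) = \gamma$. Consider the competitor $\tilde U = (\eta_i w)$ for a suitable choice of $w$. One needs $J(\tilde U) \ge J(U)$; since $C(\eta_i w, \eta_j w) = (\eta_i\eta_j)^\beta\gamma \ge C(u_i,u_j)$ termwise (for $i \ne j$ the coefficient $c_{ij}$ is nonnegative, and for $i = j$ equality holds), we get $J(\tilde U) \ge J(U)$ automatically, while $I(\tilde U) = \sum_i \eta_i^{\alpha/\beta} I_1(w)$. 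Minimizing $I_1(w)$ over $\{J_1(w) = \gamma\}$ forces $w \in \mathcal{M}$ and $I_1(w) = \min_{J_1=\gamma} I_1 \le I_1(w_i)$ for each $i$, hence $I(\tilde U) \le I(U)$. By minimality of $U$ we must have equality throughout: $I_1(w_i) = \min_{J_1=\gamma}I_1$ for every $i$, so each $w_i \in \mathcal{M}$; setting $P_i := w_i$ and $d_i := \eta_i > 0$ gives $U = (d_i P_i)$, the claimed form.

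The main obstacle I anticipate is handling the non-strict constraint $J(W) \ge J(U)$ rather than an equality, and making sure the chain of equalities really does pin down each $w_i$ in $\mathcal{M}$ rather than only pinning down the sum $\sum_i \eta_i^{\alpha/\beta} I_1(w_i)$. The point is that $I_1(w_i) \ge \min_{J_1=\gamma} I_1$ for every $i$ individually, so if the sum $\sum_i \eta_i^{\alpha/\beta} I_1(w_i)$ equals $\sum_i \eta_i^{\alpha/\beta}\min_{J_1=\gamma}I_1$ with all $\eta_i > 0$, then each term must be minimal; this requires knowing $\eta_i > 0$, which holds because $u_i \ne 0$ (here the hypothesis $U \in (X\setminus\{0\})^M$ is used essentially). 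A secondary subtlety is that the competitor $\tilde U$ must itself be admissible, i.e. have all components nonzero and satisfy $J(\tilde U) \ge J(U)$; the first is clear since $d_i > 0$ and $w \ne 0$, and the second was verified termwise above. Once these points are checked, the conclusion follows without further computation.
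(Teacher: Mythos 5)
Your argument is correct and is essentially the paper's proof run forwards instead of by contradiction: both compare $U$ with the competitor whose components are scalar multiples of a single element of $\mathcal{M}$, using (H3) to show that competitor does at least as well on $J$ (diagonal terms equal, off-diagonal terms increased since $c_{ij}\ge 0$ for $i\ne j$) and the scalar minimality to show it does at least as well on $I$. (Minor slip: by (H1), $I_1(\eta_i w_i)=\eta_i^{\alpha}I_1(w_i)$, not $\eta_i^{\alpha/\beta}I_1(w_i)$, but only the positivity of the exponent matters for your equality-case argument, so the proof stands.)
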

\begin{proof}
Let $R\in\mathcal{M}$.
First of all, we have
\ben
J_1\left(\left(\frac{J_1(R)}{J_1(u_i)}\right)^{\frac{1}{2\beta}}u_i\right)=J_1(R),\ 1\le i\le M
\een
Suppose, by absurd, and without loss of generality, that $d_1u_1\neq P, \forall d_1>0$ $\forall P\in \mathcal{M}$. By the minimality of $\mathcal{M}$,
\ben
I_1\left(\left(\frac{J_1(R)}{J_1(u_1)}\right)^{\frac{1}{2\beta}}u_1\right)>I_1(R)
\een
and
\ben
I_1\left(\left(\frac{J_1(R)}{J_1(u_i)}\right)^{\frac{1}{2\beta}}u_i\right)\ge I_1(R), \  2\le i\le M
\een
This implies that
\begin{align*}
I(U)&=I_1(u_1)+\sum_{i=2}^M I_1(u_i)> I_1\left(\left(\frac{J(u_1)}{J_1(R)}\right)^{\frac{1}{2\beta}}R\right) + \sum_{i=2}^M I_1\left(\left(\frac{J_1(u_i)}{J_1(R)}\right)^{\frac{1}{2\beta}}R\right)\\&=I\left(\left(\frac{J_1(u_1)}{J_1(R)}\right)^{\frac{1}{2\beta}}R,..., \left(\frac{J_1(u_M)}{J_1(R)}\right)^{\frac{1}{2\beta}}R\right).
\end{align*}
By the minimality of $U$,
\ben
J\left(\left(\frac{J_1(u_1)}{J_1(R)}\right)^{\frac{1}{2\beta}}R,..., \left(\frac{J_1(u_M)}{J_1(R)}\right)^{\frac{1}{2\beta}}R\right)<J(U).
\een
Using the definition of $J$,
\ben
\sum_{i,j=1, i\neq j}^M c_{ij}C\left(\left(\frac{J_1(u_i)}{J_1(R)}\right)^{\frac{1}{2\beta}}R, \left(\frac{J_1(u_j)}{J_1(R)}\right)^{\frac{1}{2\beta}}R\right)<\sum_{i,j=1, i\neq j}^M c_{ij}C(u_i,u_j)\le \sum_{i,j=1, i\neq j}^M c_{ij}J_1(u_i)^{\frac{1}{2}}J_1(u_j)^{\frac{1}{2}}.
\een
However, by the homogeneity of $C$,
\ben
\sum_{i,j=1, i\neq j}^M c_{ij}C\left(\left(\frac{J_1(u_i)}{J_1(R)}\right)^{\frac{1}{2\beta}}R, \left(\frac{J_1(u_j)}{J_1(R)}\right)^{\frac{1}{2\beta}}R\right)= \sum_{i,j=1, i\neq j}^M c_{ij}J_1(u_i)^{\frac{1}{2}}J_1(u_j)^{\frac{1}{2}},
\een
which is absurd.
\end{proof}
\end{section}
\begin{section}{Proof of the main results}
In this section, we fix $X=H^1(\real^N)$ and we adopt the definitions of section 2. Given $w,z\in H^1(\real^N)$, define
\ben
I_1(w)=\int |\nabla w|^2 + |w|^2,\ J_1(w)=\int |w|^{2p+2},\ C(w,z)=\int |w|^{p+1}|z|^{p+1}.
\een
It is easy to check that $I_1,J_1$,  $C$ satisfy (H1)-(H3).

\begin{nota}
Given $\lambda>0$, let
\ben
I^\lambda=\inf_{J(U)=\lambda} I(U) >0.
\een
By the homogenous property of $I$ and $J$, one easily checks that $I^\lambda=\lambda^{\frac{1}{p+1}}I^1$.
\end{nota}

Let
\ben
\lambda_G:=\left(\inf_{J(U)=1} I(U)\right)^{\frac{p+1}{p}}.
\een

\begin{lema}\label{igual=maior}
The minimization problems
\ben\label{minlambdaG}
I(U)=\min_{J(W)=\lambda_G} I(W),\quad J(U)=\lambda_G
\een
and
\ben\label{maior}
I(U)=\min_{J(W)\ge\lambda_G} I(W),\quad J(U)\ge\lambda_G
\een
are equivalent.
\end{lema}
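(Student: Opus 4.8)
The plan is to show the two minimization problems in \eqref{minlambdaG} and \eqref{maior} have the same solution set by a two-way inclusion, with the scaling identity $I^\lambda=\lambda^{1/(p+1)}I^1$ doing most of the work. Clearly any solution of \eqref{minlambdaG} is admissible for \eqref{maior}, so I would first check that the two infima coincide. Since the constraint set $\{J(W)\ge\lambda_G\}$ contains $\{J(W)=\lambda_G\}$, the infimum over the larger set is $\le$ the infimum over the smaller one; and by definition of $\lambda_G$ together with the homogeneity remark, $\inf_{J(W)=\lambda_G}I(W)=I^{\lambda_G}=\lambda_G^{1/(p+1)}I^1=\lambda_G^{1/(p+1)}\cdot\lambda_G^{1/(p+1)\cdot p}\cdot(\dots)$ — more precisely, recalling $\lambda_G=(I^1)^{(p+1)/p}$, one gets $I^{\lambda_G}=\lambda_G^{1/(p+1)}I^1=(I^1)^{1/p}\cdot I^1=(I^1)^{(p+1)/p}=\lambda_G$. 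So the common value of both infima is exactly $\lambda_G$; I would record this since it is the crux.

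Next I would prove the reverse inclusion: if $U$ solves \eqref{maior}, then in fact $J(U)=\lambda_G$, which immediately makes $U$ a solution of \eqref{minlambdaG} (its $I$-value equals the infimum $\lambda_G$, which also equals the infimum in \eqref{minlambdaG}). Suppose instead $J(U)=\mu>\lambda_G$. Then $I(U)\ge I^\mu=\mu^{1/(p+1)}I^1>\lambda_G^{1/(p+1)}I^1=\lambda_G$, contradicting that the minimal value in \eqref{maior} is $\lambda_G$. Hence no solution of \eqref{maior} can have $J(U)>\lambda_G$, and equality holds. Conversely, a solution $U$ of \eqref{minlambdaG} satisfies $J(U)=\lambda_G$ and $I(U)=\lambda_G$, so it is admissible for \eqref{maior} and attains the value $\lambda_G=\min_{J(W)\ge\lambda_G}I(W)$, hence solves \eqref{maior}. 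This closes the loop.

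The only genuinely delicate point is verifying that $I^\lambda$ is strictly increasing in $\lambda$, i.e.\ that $I^1>0$ so that $\mu>\lambda_G$ forces $I^\mu>I^{\lambda_G}$ strictly; this is exactly the strict positivity asserted in the remark preceding the lemma (which rests on the Gagliardo--Nirenberg/Sobolev embedding $J(U)\lesssim I(U)^{p+1}$ together with assumption (P1) guaranteeing the constraint sets are nonempty). Everything else is bookkeeping with the homogeneity relation $I(\eta^{1/\alpha}\cdot)=\eta I(\cdot)$ type identities already noted. I would present the argument as: (i) compute the common infimum value $=\lambda_G$ using $\lambda_G=(I^1)^{(p+1)/p}$; (ii) observe solutions of \eqref{minlambdaG} are solutions of \eqref{maior}; (iii) show solutions of \eqref{maior} must saturate $J=\lambda_G$ by strict monotonicity of $\lambda\mapsto I^\lambda$, hence solve \eqref{minlambdaG}.
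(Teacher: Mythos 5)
Your argument is correct and essentially reproduces the paper's proof: both rest on the homogeneity of $I$ and $J$ under scaling, which forces any minimizer over $\{J\ge\lambda_G\}$ to saturate the constraint $J=\lambda_G$. The paper merely works with the explicit rescaled competitors $cU^0$ and $cW$ (with $c\le 1$ chosen so that $J=\lambda_G$) instead of routing through the value function $I^\lambda=\lambda^{1/(p+1)}I^1$ and the identity $I^{\lambda_G}=\lambda_G$, but the mechanism is the same.
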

\begin{proof}
Let $U^0$ be a solution of \eqref{maior}. If $J(U^0)>\lambda_G$, there would exist $c<1$ such that $J(cU^0)=\lambda_G$ and $I(cU^0)=c^2I(U^0)<I(U^0)$, contradicting the minimality of $U^0$. Hence $U^0$ is a solution of \eqref{minlambdaG}.

Now let $U^0$ be a solution of \eqref{minlambdaG}. If there existed $W$ with $J(W)\ge \lambda_G$ and $I(W)<I(U_0)$, then, for some $c\le 1$, $J(cW)= \lambda_G$ and, from the minimality of $U_0$, $I(U_0)\le I(cW) \le I(W) <I(U_0)$, which is absurd.
\end{proof}

\begin{lema}\label{caracterizacao}
Suppose that there exists a solution of the problem \eqref{minlambdaG}. Then $G$ is the set of solutions for \eqref{minlambdaG}.
\end{lema}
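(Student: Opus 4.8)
The plan is to prove the two inclusions between $G$ and the solution set of \eqref{minlambdaG}, using only two elementary facts: the scaling identity $I^\lambda=\lambda^{1/(p+1)}I^1$ recorded just above, and its numerical consequence $I^{\lambda_G}=\lambda_G$. The latter is immediate from $\lambda_G=(I^1)^{(p+1)/p}$: then $\lambda_G^{1/(p+1)}=(I^1)^{1/p}$, so $I^{\lambda_G}=(I^1)^{1/p}I^1=\lambda_G$. In particular, whenever a minimizer exists, the value $\min_{J(W)=\lambda_G}I(W)$ equals $I^{\lambda_G}=\lambda_G$, so every solution $U$ of \eqref{minlambdaG} satisfies $I(U)=J(U)=\lambda_G$.

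First I would show that a solution $U$ of \eqref{minlambdaG} is a bound-state. Since $J(U)=\lambda_G>0$ we have $U\neq0$, and pairing $J'(U)$ with $U$ gives $\langle J'(U),U\rangle=(2p+2)J(U)\neq0$ by Euler's identity for homogeneous functionals, hence $J'(U)\neq0$ and $\{J=\lambda_G\}$ is a $C^1$ hypersurface near $U$; subcriticality of $p$ makes $I$ and $J$ of class $C^1$ on $(H^1(\real^N))^M$, so the Lagrange multiplier rule applies and yields $\mu\in\real$ with $I'(U)=\mu J'(U)$. Testing this identity against $U$ and using that $I$ is $2$-homogeneous and $J$ is $(2p+2)$-homogeneous gives $2I(U)=\mu(2p+2)J(U)$, and since $I(U)=J(U)=\lambda_G$ this forces $\mu=1/(p+1)$. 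Writing out $I'(U)=\frac{1}{p+1}J'(U)$ componentwise (using $k_{ij}=k_{ji}$ to combine the two off-diagonal occurrences in $J$) is precisely the system \eqref{BS}, so $U\in A$.

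Next I would record the comparison $I(W)\ge\lambda_G$ for every $W\in A$: for such $W$ one has $I(W)=J(W)=:\Lambda>0$, and by definition of $I^\Lambda$ we get $\Lambda=I(W)\ge I^\Lambda=\Lambda^{1/(p+1)}I^1$, whence $\Lambda^{p/(p+1)}\ge I^1$, i.e.\ $\Lambda\ge(I^1)^{(p+1)/p}=\lambda_G$. Combined with the previous step this gives one inclusion: a solution $U$ of \eqref{minlambdaG} lies in $A$ and satisfies $I(U)=\lambda_G\le I(W)$ for all $W\in A$; since on $A$ one has $S=\left(\frac12-\frac1{2p+2}\right)I$ with a strictly positive prefactor, this means $S(U)\le S(W)$ for all $W\in A$, i.e.\ $U\in G$. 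In particular $G\neq\emptyset$, using the standing hypothesis that \eqref{minlambdaG} is solvable.

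For the reverse inclusion, take $W\in G$. Since the fixed solution of \eqref{minlambdaG} lies in $G$ by the previous paragraph and all ground-states share the same action, $S(W)=S(U)$, hence $I(W)=\lambda_G$; as $W\in A$ we also get $J(W)=I(W)=\lambda_G$, so $W$ is admissible for \eqref{minlambdaG} and attains $I(W)=\lambda_G=I^{\lambda_G}=\min_{J(V)=\lambda_G}I(V)$, so $W$ solves \eqref{minlambdaG}, closing the equivalence. The step I expect to be the main obstacle is the Lagrange-multiplier argument: one must check that $I$ and $J$ are genuinely $C^1$ (this is where subcriticality of $p$ enters) and that a constrained minimizer is a true solution of \eqref{BS}, with the multiplier pinned to exactly $1/(p+1)$ and no residual rescaling; the rest is bookkeeping with the homogeneity relations, plus the mild subtlety in the reverse inclusion that one first needs $G\neq\emptyset$ (obtained from the forward inclusion) to know ground-states have $I=\lambda_G$.
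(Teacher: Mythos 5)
Your proposal is correct and follows essentially the same route as the paper: determine the Lagrange multiplier by testing against $U$ and using $I^{\lambda_G}=\lambda_G$ to conclude the minimizer is a bound-state, then show $I(W)\ge\lambda_G$ for every $W\in A$ via the scaling identity $I^{\lambda}=\lambda^{1/(p+1)}I^1$ (the paper phrases this as an explicit rescaling of $W$ onto the constraint set, which is the same computation), and finally force equality for $W\in G$. Your version is in fact slightly more careful than the paper's on the applicability of the multiplier rule ($C^1$ regularity and $J'(U)\ne 0$) and on pinning $\mu=1/(p+1)$, but the argument is the same.
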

\begin{proof}

Let $U$ be a minimizer of \eqref{minlambdaG}. Then, for some $\mu\in\real$ and any $H=(h_1,...,h_M)\in (H^1(\real^N))^M$,
\ben
\langle -\Delta u_i + u_i, h_i\rangle_{H^{-1}\times H^1} = \mu(p+1) \langle \sum_{j=1}^M k_{ij}|u_j|^{p+1}|u_i|^{p-1}u_i, h_i\rangle_{H^{-1}\times H^1},\  1\le i\le M.
\een
Taking $H=U$, 
\ben
\lambda_G^{\frac{1}{p+1}}I^1 =I^{\lambda_G}=I(U) = \mu(p+1)  J(U)=\mu(p+1) \lambda_G
\een
The definition of $\lambda_G$ implies that $\mu(p+1) =1$ and so $U\in A$. Therefore
\ben
I(U)=\lambda_G \mbox{ and } S(U)=\left(\frac{1}{2}-\frac{1}{2p+2}\right)\lambda_G.
\een
Now we take $W\in A$. We want to see that $S(W)\ge S(U)$. Let $\gamma=J(W)$. Then
\ben
I(W)=\gamma \mbox{ and } S(W)=\left(\frac{1}{2}-\frac{1}{2p+2}\right)\gamma.
\een
Set $X=\left(\frac{\lambda_G}{\gamma}\right)^{\frac{1}{2p+2}}W$. Then $J(X)=\lambda_G$. Since $U$ is a minimizer of \eqref{minlambdaG},
\ben
\lambda_G^{\frac{1}{p+1}}I^1 = I(U)\le I(X) = \left(\frac{\lambda_G}{\gamma}\right)^{\frac{1}{p+1}}I(W)=\left(\frac{\lambda_G}{\gamma}\right)^{\frac{1}{p+1}}\gamma
\een
and so $\gamma\ge (I^1)^{\frac{p+1}{p}}=\lambda_G$. Hence
\ben
S(W)=\left(\frac{1}{2}-\frac{1}{2p+2}\right)\gamma\ge\left(\frac{1}{2}-\frac{1}{2p+2}\right)\lambda_G=S(U),
\een
which implies $U\in G$. If $W\in G$, one must have equality in the above inequality. Then
$J(W)=\lambda_G$ and, since $U,W\in A$, $I(W)=J(W)=J(U)=I(U)$. Therefore $W$ is a minimizer of $I^{\lambda_G}$.
\end{proof}

\noindent\textbf{\textit{Proof of theorem \ref{existencia}}}:

By lemma \ref{caracterizacao}, it suffices to prove that \eqref{minlambdaG} has a solution.

Let $\{U_n\}$ be a minimizing sequence of \eqref{minlambdaG}. Fix $\epsilon=\frac{1}{m}, m\in\nat$. In what follows, $\delta(\epsilon)$ shall be a function that goes to  $0$ when $\epsilon\to0$. Through the concentration-compactness principle of P.L.Lions (\cite{pllions1}, \cite{pllions2}), up to a subsequence, it is possible to associate to each $(U_n)_i$, $1\le i\le M$, a set of functions $\{(U_n)_i^l, (W_n)_i\}_{1\le l\le L}\subset H^1(\real^N)$ (a set of bubbles plus a remainder), such that
\begin{enumerate} 
\item Each $(U_n)_i^l$ has support in a ball of radius $R$ and the distance between the supports of $(U_n)_i^l$ and $(U_n)_i^j$, $j\neq l$, goes to $\infty$ as $n\to \infty$;
\item One has 
\ben
\left|\|(U_n)_i\|_{2p+2}^{2p+2} - \sum_{l=1}^L \|(U_n)_i^l\|_{2p+2}^{2p+2}\right|<\delta(\epsilon)
\een
and
\ben
\|\nabla (U_n)_i\|_2^2 \ge \sum_{l=1}^L \|\nabla (U_n)_i^l\|_2^2 -\delta(\epsilon), \ \|(U_n)_i\|_2^2 \ge \sum_{l=1}^L \|(U_n)_i^l\|_2^2 -\delta(\epsilon)
\een
\end{enumerate}
Essentially, one applies successively the concentration-compactness principle to each sequence $\{(U_n)_i\}$ to obtain the various bubbles. This process ends since the total $L^2$ norm is finite and because one always picks up the bubble with greater $L^2$ norm, which implies that, after $L_i$ steps, the remainder $W_n$ has $L^{2p+2}$ norm smaller than $\epsilon$. Setting $L=\max\{L_i\}$, we define, for each $i$, $(U_n)_i^l=0$ if $L_i<l\le L$.

One easily sees that, up to a subsequence, it is possible to group the bubbles into several clusters in such a way that: each cluster has one and only one bubble from each sequence $\{(U_n)_i\}$; if the supports of two bubbles have a nonempty intersection, then they must belong to the same cluster. Obviously, we shall end up with $L$ clusters. Define $U_n^l$ as the vector of bubbles from the cluster $l$. Then
\ben
\sum_{i=1}^M \|(U_n)_i\|_2^2 \ge \sum_{l=1}^L \sum_{i=1}^M \|(U_n^l)_i\|_2^2-\delta(\epsilon)
\een
and
\ben
\sum_{i=1}^M \|(\nabla U_n)_i\|_2^2 \ge \sum_{l=1}^L \sum_{i=1}^M \|\nabla (U_n)_i\|_2^2-\delta(\epsilon).
\een
Due to the way we grouped the bubbles, we have
\ben
\left| J(U_n) - \sum_{l=1}^L J(U_n^l)\right|\le \delta(\epsilon).
\een
Up to a subsequence, we can define $\lambda_l:=\lim J(U_n^l)$, $1\le l\le L$.
Using a diagonalization process, we obtain, for each $n$, a decomposition of $\{U_n\}$ in $L_n$ bubbles (where $L_n\to \overline{L}\in \nat\cup \{\infty\}$) such that 
\ben
\sum_{i=1}^M \|(U_n)_i\|_2^2 \ge \sum_{l=1}^L \sum_{i=1}^M \|(U_n^l)_i\|_2^2-\delta\left(\frac{1}{n}\right),\quad 
\sum_{i=1}^M \|(\nabla U_n)_i\|_2^2 \ge \sum_{l=1}^L \sum_{i=1}^M \|\nabla (U_n)_i\|_2^2-\delta\left(\frac{1}{n}\right),
\een
\ben
\left| J(U_n) - \sum_{l=1}^{L_n} J(U_n^l)\right|\le \delta\left(\frac{1}{n}\right)
\een
and
\ben\label{somalambdas}
\lambda_G=\sum_{l=1}^{\overline{L}} \lambda_l
\een
\textit{Case 1:} If $\lambda_l\ge0$, for any $l$, one has
\ben
J\left(\left(\frac{\lambda_l}{J(U_n^l)}\right)^{\frac{1}{2p+2}}U_n^l\right)=\lambda_l
\een
and so
\ben
I^{\lambda_G} = \lim I(U_n) \ge \limsup \sum_{l= 1}^{L_n} \frac{J(U_n^l)}{\lambda_l}I\left(\left(\frac{\lambda_l}{J(U_n^l)}\right)^{\frac{1}{2p+2}}U_n^l\right) \ge \limsup \sum_{l= 1}^{L_n} I^{\lambda_l}= \sum_{l=1}^{\overline{L}} I^{\lambda_l}.
\een
However, the function
\ben
\lambda\mapsto I^{\lambda}=\lambda^{\frac{1}{p+1}}I^1
\een
is strictly concave in $\real^+$, which implies that there exists $l_0$ such that $\lambda_l=0$, for $l\neq l_0$. By \eqref{somalambdas}, $\lambda_{l_0}=\lambda_G$. Therefore, defining
\ben
W_n=\left(\frac{\lambda_G}{J(U_n^{l_0})}\right)^{\frac{1}{2p+2}}U_n^{l_0},
\een
one has
\ben
\liminf I(U_n)-I(W_n)\ge 0,\quad J(W_n)=\lambda_G
\een
and so $\{W_n\}$ is a minimizing sequence for \eqref{minlambdaG}, for which the compactness alternative from the concentration-compactness principle is verified (recall that $W_n$ is, up to a multiplicative factor, the vector of a group of bubbles of $U_n$). Since $\{W_n\}$ is bounded in $(H^1(\real^N))^M$, there exists $W\in (H^1(\real^N))^M$ such that $W_n\rightharpoonup W$ and, from the compactness alternative, it follows that $W_n\to W$ in $(L^2(\real^N)\cap L^{2p+2}(\real^N))^M$. In particular,
\ben
I(W)\le \lim I(W_n) = I^{\lambda_G}, \quad J(W)=\lim J(W_n)=\lambda_G.
\een
Therefore $W$ is a minimizer of \eqref{minlambdaG}.

\noindent\textit{Case 2}: Now suppose that
\ben
L^-=\{l: \lambda^l<0\}\neq\emptyset.
\een
Define $L^+$ to be the complementary set of $L^-$ and
\ben
\eta_l:=\frac{\sum_{j=1}^{\overline{L}} \lambda_j}{\sum_{l\in L^+} \lambda_j}\lambda_l.
\een
Notice that \eqref{somalambdas} implies $L^+\neq\emptyset$. Furthermore,
\ben
\lambda_G=\sum_{l\in L^+} \eta_l.
\een
Since
\ben
J\left(\left(\frac{\eta_l}{J(U_n^l)}\right)^{\frac{1}{2p+2}}U_n^l\right)=\eta_l, \  l\in L^+,
\een
one has
\ben
I^{\lambda_G} = \lim I(U_n) \ge \limsup \sum_{l\in L^+} \frac{J(U_n^l)}{\eta_l}I\left(\left(\frac{\eta_l}{J(U_n^l)}\right)^{\frac{1}{2p+2}}U_n^l\right) \ge \limsup \sum_{l= 1}^{L_n} I^{\eta_l} = \sum_{l=1}^{\overline{L}} I^{\eta_l}.
\een
We now conclude in the same way as the previous case. $\qedsymbol$
\vskip10pt

For the case where all components attract each other, one may improve the above result using Schwarz symmetrization. This fact is not new (see \cite{linwei}), however we display the following result for the sake of completeness.

\begin{prop}
If $k_{ij}\ge 0, \forall 1\le i\neq j\le M$, then \eqref{minlambdaG} has a positive, radial, decreasing solution.
\end{prop}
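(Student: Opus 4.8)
The plan is to start from a minimizer of \eqref{minlambdaG} — which exists by (the proof of) Theorem \ref{existencia} — and replace each of its components by its Schwarz symmetrization, checking that this operation does not increase $I$ and does not decrease $J$; one thus obtains a new minimizer whose components are spherically symmetric, radially non-increasing and nonnegative, and a final maximum-principle argument upgrades the nontrivial ones to strictly positive and strictly decreasing.

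In detail, let $W=(w_1,\dots,w_M)$ solve \eqref{minlambdaG} and set $W^*=(|w_1|^*,\dots,|w_M|^*)$, where $h^*$ denotes the symmetric decreasing rearrangement of $h\ge 0$. I would invoke the three classical facts: equimeasurability, $\||w_i|^*\|_q=\|w_i\|_q$ for every $q$; the Pólya–Szegő inequality $\|\nabla|w_i|^*\|_2\le\|\nabla w_i\|_2$; and the Riesz (Hardy–Littlewood) rearrangement inequality together with the identity $(h^{p+1})^*=(h^*)^{p+1}$, which give
\[
C(w_i,w_j)=\int|w_i|^{p+1}|w_j|^{p+1}\le\int(|w_i|^*)^{p+1}(|w_j|^*)^{p+1}=C(|w_i|^*,|w_j|^*).
\]
The first two facts yield $I(W^*)\le I(W)$ and leave the diagonal terms $k_{ii}\int|w_i|^{2p+2}$ of $J$ unchanged; the third, combined with the hypothesis $k_{ij}\ge0$ for $i\neq j$, shows the off-diagonal terms do not decrease, so $J(W^*)\ge J(W)=\lambda_G$. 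Hence $J(W^*)\ge\lambda_G$ and $I(W^*)\le I^{\lambda_G}$, and by Lemma \ref{igual=maior} — or directly, by rescaling $W^*$ down onto $\{J=\lambda_G\}$ and using $I^{\lambda_G}=\inf_{J=\lambda_G}I$ — this forces $J(W^*)=\lambda_G$ and $I(W^*)=I^{\lambda_G}$. Thus $W^*$ again solves \eqref{minlambdaG}, with nonnegative, radial, radially non-increasing components.

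It remains to promote each nontrivial component to a strictly positive, strictly decreasing function. By Lemma \ref{caracterizacao}, $W^*\in G\subset A$, so $W^*$ solves \eqref{BS}; since $p<4/(N-2)^+$, the usual elliptic bootstrap shows each component is $C^2$ and bounded. Rewriting the $i$-th equation as
\[
-\Delta w_i^*+\bigl(1-k_{ii}|w_i^*|^{2p}\bigr)w_i^*=\sum_{j\neq i}k_{ij}|w_j^*|^{p+1}|w_i^*|^{p-1}w_i^*\ \ge\ 0,
\]
each nontrivial $w_i^*$ is a nonnegative supersolution of a Schrödinger operator with continuous potential, so the strong maximum principle gives $w_i^*>0$; strict radial monotonicity follows by the standard argument that a radial $C^2$ solution cannot be constant on an annulus without forcing, through the equation, an algebraic identity among the components that cannot persist there.

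I expect the main obstacle to be exactly the sign bookkeeping that makes symmetrization work: it is the nonnegativity of the couplings $k_{ij}$, $i\neq j$, that guarantees $J(W^*)\ge J(W)$, and this is precisely where the hypothesis enters (and where the method breaks down for repulsive couplings). The only other delicate point is the passage from nonnegative to strictly positive components, where a little care is needed with the zeroth-order coefficient when $p<1$; this is dissolved once one knows $W^*$ solves \eqref{BS} and is therefore smooth and bounded, after which the strong maximum principle applies in the usual way.
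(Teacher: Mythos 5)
Your proof is correct and rests on the same two pillars as the paper's: Schwarz symmetrization (equimeasurability, P\'olya--Szeg\H{o}, and Hardy--Littlewood for the off-diagonal terms of $J$, which is exactly where $k_{ij}\ge 0$ enters) followed by Lemma \ref{igual=maior} to pass from $J\ge\lambda_G$ back to $J=\lambda_G$. The organization, however, differs in a way worth noting. The paper symmetrizes a minimizing \emph{sequence} and then invokes a compactness result for radial, radially decreasing $H^1$ sequences to extract a limit solving \eqref{maior}; this makes the proposition a self-contained existence proof in the attractive case, independent of the concentration-compactness machinery behind Theorem \ref{existencia}. You instead take a minimizer already furnished by Theorem \ref{existencia} and symmetrize it once; this is logically sound (there is no circularity, since the proof of Theorem \ref{existencia} does not use this proposition) and shorter, at the price of importing the harder existence theorem rather than reproving existence by elementary radial compactness. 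You also go beyond the paper in upgrading ``nonnegative, radially non-increasing'' to ``strictly positive, strictly decreasing'' via elliptic regularity and the strong maximum principle applied to \eqref{BS} — the paper's proof stops at the symmetrized limit and leaves the strict statements implicit. Your positivity step is fine (once the components are bounded, the zeroth-order coefficient $1-k_{ii}|w_i^*|^{2p}$ is bounded, and the right-hand side is nonnegative because $k_{ij}\ge0$ and $w_j^*\ge0$), but the strict-monotonicity claim is only gestured at; if you want to include it, argue from the radial ODE form $\bigl(r^{N-1}(w_i^*)'\bigr)'=r^{N-1}\bigl(w_i^*-\sum_j k_{ij}(w_j^*)^{p+1}(w_i^*)^{p}\bigr)$ rather than from ``an algebraic identity that cannot persist,'' which as stated is not a proof.
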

\begin{proof}
Let $\{U_n\}$ be a minimizing sequence of \eqref{minlambdaG}. Defining $|W|:=(|w_1|,...,|w_M|)$, clearly $\{|U_n|\}$ is also a minimizing sequence. Let $|W|^*=(|w_1|^*,...,|w_M|^*)$ be the vector of the Schwarz symmetrizations of the components of  $|W|$. The properties of the symmetrization imply that $\{|U_n|^*\}$ satisfies
\ben
J(|U_n|^*)\ge\lambda_G, \quad I^{\lambda_G}\le\liminf I(|U_n|^*)\le \lim I(U_n)=I^{\lambda_G}.
\een

Using a compactness result for Schwarz symmetrizations, up to a subsequence, $|U_n|^*\rightharpoonup U$ in $(H^1(\real^N))^M$ and $|U_n|^*\to U$ in $(L^2(\real^N)\cap L^{2p+2}(\real^N))^M$. Hence
\ben
J(U)=\lim J(|U_n|^*)\ge\lambda_G,\ \quad I^{\lambda_G}\le I(U)\le\liminf I(|U_n|^*) = I^{\lambda_G}.
\een
Therefore $U$ is a solution of \eqref{maior} and, by lemma \ref{igual=maior}, it is a solution of \eqref{minlambdaG}.
\end{proof}

\noindent\textit{\textbf{Proof of theorem \ref{atractivoM}:}}
We divide the proof in three steps:

\textit{Step 1:} $U^0\in G^+$ satisfies \eqref{formula}, with $A^0=(a_1,...,a_M)\in S^+$.

Let $U_0\in G^+$. 
%Se $(u_0,v_0)\in G\setminus G^+$, então uma das componentes é $0$ e portanto a outra componente é um ground-state da equação escalar. Pelo lema \ref{caracterizacaoQ}, $(u_0,v_0)$ satisfaz \eqref{formula}, com $(a_0,b_0)\in H_0$.
By lemmata \ref{unicidadeQ}, \ref{caracterizacaoQ}, \ref{igual=maior} and \ref{caracterizacao},  we may apply lemma \ref{abstracto} to $I_1,J_1$ and $C$ and therefore we conclude that there exist, for each $1\le i\le M$, $a_i>0$, $\theta_i\in\real$ and $y_i\in \real^N$ such that
\ben
U^0=(a_ie^{i\theta_i}Q(\cdot + y_i))_{1\le i\le M}.
\een

If there exist $i_0, j_0$ such that $y_{i_0}\neq y_{j_0}$, one easily sees that there exists $D\subset \real^N$ of positive measure such that, for all $x\in D$, $Q(x+y_{i_0})\neq Q(x+y_{j_0})$ and so, using Young's inequality,
\ben
Q(x+y_{i_0})^{p+1}Q(x+y_{j_0})^{p+1}< \frac{1}{2}Q(x+y_{i_0})^{2p+2} + \frac{1}{2}Q(x+y_{j_0})^{2p+2},\quad x\in D.
\een
On the other hand, we have in general
\ben
Q(x+y_{i})^{p+1}Q(x+y_{j})^{p+1}\le \frac{1}{2}Q(x+y_{i})^{2p+2} + \frac{1}{2}Q(x+y_{j})^{2p+2},\quad x\in \real^N,\  1\le i,j\le M.
\een
Consequently,
\begin{align*}
\int (a_{i}Q(\cdot+y_{i}))^{p+1}(a_jQ(\cdot+y_j))^{p+1} \le a_i^{p+1}a_j^{p+1}\left(\frac{1}{2}\int Q(\cdot+y_i)^{2p+2} + \frac{1}{2}\int Q(\cdot+y_j)^{2p+2}\right)\\ = a_i^{p+1}a_j^{p+1}\int Q^{2p+2} = \int (a_iQ)^{p+1}(a_jQ)^{p+1},
\end{align*}
with strict inequality if $i=i_0$ and $j=j_0$.
Therefore, $\lambda_G=J(U^0)<J((a_i Q)_{1\le i\le M})=:\lambda$. Hence
\ben
J\left(\left(\frac{\lambda_G}{\lambda}\right)^{\frac{1}{2p+2}}(a_i Q)_{1\le i\le M}\right)=\lambda_G
\een
and
\ben
 I\left(\left(\frac{\lambda_G}{\lambda}\right)^{\frac{1}{2p+2}} (a_i Q)_{1\le i\le M}\right)<I\left((a_i Q)_{1\le i\le M}\right)=
I(U_0),
\een
which contradicts the minimality of $U_0$. Therefore $y_i=y_j$, for any $1\le i,j\le M$ and so $U_0$ is of the form \eqref{formula}.

Replacing the formula of $U_0$ into the system \eqref{BS}, we derive
\ben
\sum_{j=1}^M k_{ij} a_i^{p-1}a_j^{p+1}=1\ \forall 1\le i\le M.
\een
Hence $A_0\in S^+$.

\textit{Step 2:} If $U^0$ is of the form \eqref{formula}, with $A_0\in S^+$, $U^0\in A$.

Simply notice that $U^0$ satisfies the system \eqref{BS}, using the conditions of $S^+$.

\textit{Step 3:} Conclusion.

Let $U^0\in G^+$. If $A_0$ does not satisfy \eqref{mina_0b_0}, then either
\ben
\min_{U\in G\setminus G^+} I(U)<\sum_{i=1}^M a_i^2I_1(Q)=I(U^0)
\een
or there exists $B\in S^+$ such that
\ben
\sum_{i=1}^M b_i^2I_1(Q) < \sum_{i=1}^M a_i^2I_1(Q).
\een
In the first case, there would exist $U\in G\setminus G^+$ with $I(U)<I(U^0)$, which contradicts $U^0\in G$. In the second case, given $\theta_i\in\real$, $1\le i\le M$, and $y\in\real^N$,
\ben
W^0:=(b_ie^{i\theta_i}Q(\cdot + y))_{1\le i\le M}
\een
is in $A$. Moreover,

\begin{align*}
S(W^0)&=\left(\frac{1}{2}-\frac{1}{2p+2}\right)I(W^0)=\left(\frac{1}{2}-\frac{1}{2p+2}\right)\sum_{i=1}^M b_i^2I_1(Q) \\&< \left(\frac{1}{2}-\frac{1}{2p+2}\right)\sum_{i=1}^M a_i^2I_1(Q) = S(U_0),
\end{align*}
which contradicts  $U_0\in G$. We conclude that $A_0$ satisfies \eqref{mina_0b_0}. 
It remains to prove that $W^0\in G$. In fact,
\begin{align*}
S(W^0)&=\left(\frac{1}{2}-\frac{1}{2p+2}\right)I(W^0)=\left(\frac{1}{2}-\frac{1}{2p+2}\right)\sum_{i=1}^Mb_i^2I_1(Q) \\&= \left(\frac{1}{2}-\frac{1}{2p+2}\right)\sum_{i=1}^M a_i^2I_1(Q) = S(U^0).
\end{align*}
Therefore $W_0\in G$, which ends the proof. $\qedsymbol$
\vskip10pt

\noindent\textbf{\textit{Proof of theorem \ref{repulsivoM}:}}
The partition $\{Y_k\}_{1\le k\le K}$ defines an equivalence relation in the set $\{1,...,M\}$:
\ben
i\sim j \mbox{ if and only if } \exists k\ i,j\in Y_k.
\een

We claim that \eqref{minlambdaG} is equivalent to 
\ben\label{minlambdaGC=0}
\sum_{i=1}^M I(u_i)=\min_B \sum_{i=1}^M I(w_i),\quad (u_1,...,u_M)\in B
\een
where
\ben
B=\left\{(w_1,...,w_M)\in (H^1(\real^N))^M: \sum_{k=1}^K \sum_{i,j \in Y_k} k_{ij}C(w_i,w_j)=\lambda_G,\ C(w_i,w_j)=0 \mbox{ if } i\not\sim j\right\}.
\een
To see this, suppose that $U^0$ is a solution of \eqref{minlambdaG}. If $C(u_i,u_j)=0, \forall  i\not\sim j$, then $U^0$ is a solution of \eqref{minlambdaGC=0}. By absurd, suppose that there exist $i_0\not\sim j_0$ such that $C(u_{i_0},u_{j_0})\neq 0$. Let $U^R$ be defined by
\ben
(U^R)_i=(U^0)_i, \mbox{ if } i\not\sim j_0,\quad (U^R)_i=(U^0)_i(\cdot+Re_1)\mbox{ if } i\sim j_0.
\een
Then, for large $R$, $C((U^R)_i,(U^R)_j)\le C((U^0)_i,(U^0)_j)$ if $i\not\sim j$ (with strict inequality if $i=i_0$, $j=j_0$) and $C((U^R)_i,(U^R)_j)= C((U^0)_i,(U^0)_j)$ if $i\sim j$. Hence, $J(U^R)>J(U^0)$. Since
\ben
J\left(\left(\frac{J(U^0)}{J(U^R)}\right)^{\frac{1}{2p+2}}U^R\right) = J(U^R),
\een
we have, by the minimality of $U^0$,
\ben
I(U^0)\le I\left(\left(\frac{J(U^0)}{J(U^R)}\right)^{\frac{1}{2p+2}}U^0\right) = \left(\frac{J(U^0)}{J(U^R)}\right)^{\frac{1}{p+1}}I(U^0)<I(U^0),
\een
which is absurd.
On the other hand, if $U^0$ is a solution of \eqref{minlambdaGC=0}, suppose thet there exists  $W$ such that $J(W)=\lambda_G$ and $I(W)<I(U^0)$. If $C(w_i,w_j)=0, \forall  i\not\sim j$, we obtain, through the minimality of $U^0$, $I(U^0)\le I(W)$, which is absurd. If there exist $i_0\not\sim j_0$ such that $C(w_{i_0},w_{j_0})>0$, let $\xi:\real^N\to[0,1]$ be a smooth cutoff function with support on the unit ball and $\xi_R(x)=\xi(x/R)$. Define $W^R$ by
\ben
 (W^R)_i=\xi_Rw_i(\cdot + 2kRe_1),\mbox{ if } i\in Y_k
\een
and, for each $n\in\nat$, let $R_n$ be such that
\ben
|I_1((W^{R_n})_i) - I_1(w_i))|<\frac{1}{n},\ |J_1((W^{R_n})_i) - J_1(w_i))|<\frac{1}{n}.
\een
It is clear that
\ben
C((W^{R_n})_i,(W^{R_n})_j)=0, \ i\not\sim j;\ C((W^{R_n})_i,(W^{R_n})_j)=C(W_i,W_j), \ i\sim j
\een
and so
\ben
\limsup J(W^{R_n})\ge \lambda_G
\een
Therefore there exist $\lambda_n$, with $\liminf \lambda_n\le 1$, such that
\ben
 J(\lambda_nW^{R_n})= \lambda_G
\een
and, by the minimality of $U^0$,
$$I(U^0)\le \lim I(\lambda_nW^{R_n}) =I(W)<I(U^0),$$
which is absurd. Hence $U^0$ is a solution of \eqref{minlambdaG}. Thus the minimization problems \eqref{minlambdaG} and \eqref{minlambdaGC=0} are equivalent.
\vskip10pt

Let
\ben
K_G=\{k\in \{1,...,K\}: \exists U\in (H^1(\real^N))^M: \sum_{i,j\in Y_k} k_{ij}C(u_i,u_j)> 0\}.
\een

For $Z=(z_1,...,z_M)\in B$, define
\ben
K^+=\{ k\in \{1,...,K\}: \sum_{i,j\in Y_k} k_{ij}C(z_i,z_j)> 0\}\subset K_G,
\een
and $\overline{Z}$ as $\overline{Z}_i=z_i,\ i\in Y_k, k\in K^+$ and $\overline{Z}_i=0,\ i\in Y_k, k\notin K^+$.

Then
\ben
W:=\left(\frac{J(Z)}{J(\overline{Z})}\right)^{\frac{1}{2p+2}}\overline{Z}\in B\mbox{ and } I\left(\left(\frac{J(Z)}{J(\overline{Z})}\right)^{\frac{1}{2p+2}}\overline{Z}\right)\le I(\overline{Z})\le I(Z).
\een

with strict inequality if $Z\neq \overline{Z}$.

%Let $W=(w_1,...,w_M)\in B$. Suppose that for some $k$, $\sum_{i,j\in Y_k} k_{ij}C(u_i,u_j)<0$, $\forall U\in (H^1(\real^N))^M\setminus\{0\}$. Define $\overline{W}$ as $\overline{W}_i=w_i,\ i\notin Y_k$ and $\overline{W}_i=0,\ i\in Y_k$. Then
%$$
%\left(\frac{J(W)}{J(\overline{W})}\right)^{\frac{1}{2p+2}}\overline{W}\in B\mbox{ and } I\left(\left(\frac{J(W)}{J(\overline{W})}\right)^{\frac{1}{2p+2}}\overline{W}\right)<I(W).
%$$
%Then we can throw off the components in $Y_k$ and solve the problem for the remaining components. Thus we shall assume from now on that 
%$$
%\{U\in (H^1(\real^N))^M: \sum_{i,j\in Y_k} k_{ij}C(u_i,u_j)>0\}\neq \emptyset, \forall{1\le k\le K}
%$$

For each $1\le k\le K_G$, let $Q^k \in H^1(\real^N)^{|Y_k|}$ be a ground-state of the system formed by the equations of the  $i$-th components, with $i\in Y_k$. Fix $k\in K^+$. Then, defining
\ben
c_k=\left(\frac{\sum_{i,j\in Y_k} k_{ij}C(w_i,w_j)}{\sum_{i,j\in Y_k} k_{ij}C(Q_i^k,Q_j^k)}\right)^{\frac{1}{2p+2}}
\een
we have
\ben
\sum_{i,j\in Y_k} k_{ij}C\left(\frac{w_i}{c_k},\frac{w_j}{c_k}\right) = \sum_{i,j\in Y_k} k_{ij}C\left(Q^k_i,Q^k_j\right) .
\een
Since $Q^k$ is a solution of \eqref{minlambdaG}, with $M=|Y_k|$ and $i,j\in Y_k$, we obtain
\ben\label{Meq1}
\sum_{i\in Y_k} I_1(Q_i^k) \le \sum_{i\in Y_k} I_1\left(\frac{w_i}{c_k}\right)=\frac{1}{c_k^2}\sum_{i\in Y_k} I_1(w_i)
\een
and so
\begin{align*}
 \sum_{k\in K^+}^K c_k^2 \sum_{i\in Y_k} I_1(Q^k_i)\le \sum_{i=1}^M I_1(w_i).
\end{align*}
Let $k_0$ be such that
\ben
\frac{\left(\sum_{i\in k_0}I_1(Q_i^{k_0})\right)^{p+1}}{\sum_{i,j\in Y_{k_0}}k_{ij}C(Q_i^{k_0},Q_j^{k_0})}\le \frac{\left(\sum_{i\in k}I_1(Q_i^{k})\right)^{p+1}}{\sum_{i,j\in Y_{k}}k_{ij}C(Q_i^{k},Q_j^{k})}, \ \forall k\in K_G.
\een
Let $\mathcal{Q}\in (H^1(\real^N))^M$ be defined by $\mathcal{Q}_i=0$ if $i\notin Y_{k_0}$ and otherwise
\ben
\mathcal{Q}_i=\left(\frac{\lambda_G}{\sum_{i,j\in Y_{k_0}}k_{ij}C(Q_i^{k_0},Q_j^{k_0})}\right)^{\frac{1}{2p+2}}Q_i^{k_0}=\left(\frac{\sum_{k\in K^+}\sum_{i,j\in Y_k} k_{ij}C(w_i,w_j)}{\sum_{i,j\in Y_{k_0}}k_{ij}C(Q_i^{k_0},Q_j^{k_0})}\right)^{\frac{1}{2p+2}}Q_i^{k_0} =: d_{k_0}Q_i^{k_0}.
\een
It is easy to see that $J(\mathcal{Q})=\lambda_G$ and, by the definition of $k_0$,
\begin{align*}
I(\mathcal{Q})&=d_{k_0}^2 \sum_{i\in Y_{k_0}} I_1(Q_i^{k_0}) = \left(\sum_{k\in K^+} \sum_{i,j\in Y_k} k_{ij}C(w_i,w_j) \frac{\left(\sum_{i\in Y_{k_0}}I_1(Q_i^{k_0})\right)^{p+1}}{\sum_{i,j\in Y_{k_0}}k_{ij}C(Q_i^{k_0},Q_j^{k_0})}\right)^{\frac{1}{p+1}} \\
&\le \left(\sum_{k\in K^+}\frac{\sum_{i,j\in Y_k} k_{ij}C(w_i,w_j)}{\sum_{i,j\in Y_{k}}k_{ij}C(Q_i^{k},Q_j^{k})}\left(\sum_{i\in Y_k}I_1(Q_i^{k})\right)^{p+1}\right) ^{\frac{1}{p+1}}=\left(\sum_{k\in K^+} \left(c_k^2\sum_{i\in Y_k}I_1(Q_i^{k})\right)^{p+1}\right)^{\frac{1}{p+1}}\\
&\le \sum_{k\in K^+} c_k^2 \sum_{i\in Y_k} I_1(Q^k_i)\le \sum_{i=1}^M I_1(w_i) = I(W) \le I(Z).
\end{align*}
Therefore $\mathcal{Q}$ is a solution of \eqref{minlambdaGC=0} and, by lemma \ref{caracterizacao}, $\mathcal{Q}\in G$.
Finally, if $Z\in G$, then $I(Z)=I(\mathcal{Q})$, which implies that all of the above inequalities must be in fact equalities. From the above computation, we obtain, for some $1\le k_Z\le K$, $c_k=0$, $\forall k\neq k_Z$ and $\overline{Z}=Z$. Hence $K^+=\{k_Z\}$ and the proof is concluded. $\qedsymbol$

\end{section}

\begin{section}{Some special cases}

In this section, we apply the results to some special cases, obtaining in particular the results of \cite{mazhao} and \cite{weiyao}. We shall always suppose $k_{ij}\ge 0$, $i\neq j$. 

We start with $M=2$. Given $(u_0,v_0)\in G^+$, we note by $a_0, b_0$ the constants of the characterization from theorem \ref{atractivoM}. 

\begin{cor}
Suppose that $k_{11}= k_{22}\le 0$ and $k_{12}>-k_{11}$. Let $(u_0,v_0)\in G^+$. Then $a_0=b_0=(k_{11}+k_{12})^{-\frac{1}{2p}}$.
\end{cor}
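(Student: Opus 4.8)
The statement to prove is that, in the case $M=2$ with $k_{11}=k_{22}\le 0$ and $k_{12}>-k_{11}$, any fully nontrivial ground-state $(u_0,v_0)$ has $a_0=b_0=(k_{11}+k_{12})^{-1/(2p)}$. By Theorem~\ref{atractivoM}, we know $a_0,b_0>0$ and $(a_0,b_0)\in S^+$, that is
\begin{align*}
k_{11}a_0^{p-1}a_0^{p+1}+k_{12}a_0^{p-1}b_0^{p+1}&=1,\\
k_{22}b_0^{p-1}b_0^{p+1}+k_{12}b_0^{p-1}a_0^{p+1}&=1,
\end{align*}
i.e.\ $k_{11}a_0^{2p}+k_{12}a_0^{p-1}b_0^{p+1}=1$ and $k_{11}b_0^{2p}+k_{12}b_0^{p-1}a_0^{p+1}=1$. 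The plan is: (i) first determine the full solution set $S^+$ of this algebraic system; (ii) then use the minimization condition \eqref{mina_0b_0} of Theorem~\ref{atractivoM} — which forces $(a_0,b_0)$ to minimize $b_1^2+b_2^2$ over $S^+$ (and to beat any element of $G\setminus G^+$) — to single out the symmetric solution.

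For step (i), I would first show $a_0=b_0$ is the only candidate by a symmetry/monotonicity argument on the two equations. Subtracting the two equations gives $k_{11}(a_0^{2p}-b_0^{2p})+k_{12}(a_0^{p-1}b_0^{p+1}-b_0^{p-1}a_0^{p+1})=1-1=0$. Factor the second bracket as $a_0^{p-1}b_0^{p-1}(b_0^2-a_0^2)$, so
\[
(a_0^{2p}-b_0^{2p})k_{11}=k_{12}a_0^{p-1}b_0^{p-1}(a_0^2-b_0^2).
\]
If $a_0\neq b_0$, say $a_0>b_0$, then $a_0^{2p}-b_0^{2p}>0$ and $a_0^2-b_0^2>0$, so dividing gives a relation between $k_{11}$ and $k_{12}$; with $k_{11}\le 0$ the left side is $\le 0$ while the right side is $>0$ (since $k_{12}>-k_{11}\ge 0$ forces $k_{12}>0$, and the powers are positive), a contradiction. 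Hence $a_0=b_0$, and plugging back into either equation of $S^+$ gives $k_{11}a_0^{2p}+k_{12}a_0^{2p}=1$, i.e.\ $a_0^{2p}=(k_{11}+k_{12})^{-1}$, which is positive precisely because $k_{12}>-k_{11}$. This yields $a_0=b_0=(k_{11}+k_{12})^{-1/(2p)}$ as the \emph{unique} element of $S^+$.

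Since $S^+$ turns out to be a singleton, step (ii) is actually not needed to pin down $(a_0,b_0)$ once we know $(u_0,v_0)\in G^+$: the value of $a_0,b_0$ is forced. The one subtlety to address is that we are \emph{assuming} $G^+\neq\emptyset$ (the statement is ``let $(u_0,v_0)\in G^+$''), so we need only the ``only if'' direction of Theorem~\ref{atractivoM}, which directly gives $(a_0,b_0)\in S^+$; combined with $|S^+|=1$ we are done, and no comparison with $G\setminus G^+$ is required. I expect the main (minor) obstacle to be the sign bookkeeping in the subtraction argument: one must carefully use $0<p$ (so $2p>0$ and the map $t\mapsto t^{2p}$ is strictly increasing on $\real^+$), $k_{11}\le 0$, and $k_{12}>-k_{11}\ge0$ (hence $k_{12}>0$), to rule out $a_0\neq b_0$; the case $p<1$ where $a_0^{p-1}$ has a negative exponent is harmless since $a_0,b_0>0$. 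Everything else is a direct substitution.
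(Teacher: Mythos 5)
Your proof is correct and follows essentially the same route as the paper: both start from the $S^+$ system given by Theorem \ref{atractivoM}, subtract the two equations, and derive a sign contradiction from $k_{11}=k_{22}\le 0$ and $k_{12}>0$ to force $a_0=b_0$, then read off the common value. The only cosmetic difference is that the paper first multiplies the equations by $a_0^2$ and $b_0^2$ so the cross terms cancel, whereas you subtract directly and factor them; your observation that step (ii) of Theorem \ref{atractivoM} is not needed because $S^+$ is a singleton is also consistent with the paper's argument.
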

\begin{proof}
By theorem \ref{atractivoM}, we know that
\ben
\left\{\begin{array}{l}
k_{11}a_0^{2p} + k_{12}a_0^{p-1}b_0^{p+1}=1\\
k_{22}b_0^{2p} + k_{12}b_0^{p-1}a_0^{p+1}=1
\end{array}\right..
\een
Suppose that $a_0\neq b_0$. By the symmetry of the system, it's enough to prove that $a_0\ge b_0$.

Multiplying the first equation by $a_0^2$, the second by $b_0^2$ and subtracting,
\ben
k_{11}a_0^{2p+2} - k_{22}b_0^{2p+2} = a_0^2 - b_0^2.
\een

If $a_0<b_0$, the left-hand side is nonnegative and the right one is negative, which is absurd. Therefore $a_0=b_0$. The value of $a_0$ can now be directly calculated from the system.
\end{proof}
\begin{cor}
Suppose that $p=1$ and $k_{ij}>0$, $i,j=1,2$. Then
\begin{enumerate}
\item If $k_{11}\neq k_{22}$ and $k_{11}\le k_{12}\le k_{22}$, $G^+=\emptyset$;
\item If $k_{12}\notin [\min\{k_{11},k_{22}\},\max\{k_{11},k_{22}\}]$ and $(u_0,v_0)\in G^+$, then
\ben\label{raizes}
a_0=\sqrt{\frac{k_{22}-k_{12}}{k_{11}k_{22}-k_{12}^2}},\quad b_0=\sqrt{\frac{k_{11}-k_{12}}{k_{11}k_{22}-k_{12}^2}}.
\een
Consequently, $G^+=\emptyset$ if $k_{12}<\min\{k_{11},k_{22}\}$ and $G^+=G$ if $k_{12}>\max\{k_{11},k_{22}\}$.
\item If $k_{11}=k_{12}=k_{22}$, $(u_0,v_0)\in G^+$ if and only if
\ben
(a_0,b_0)=\left(\frac{1}{\sqrt{k_{11}}}\cos \alpha, \frac{1}{\sqrt{k_{11}}}\sin \alpha\right), \ \alpha\in ]0,\pi/2[.
\een
\end{enumerate}
\end{cor}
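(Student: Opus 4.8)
The plan is to funnel everything through theorem~\ref{atractivoM}, so that describing $G^+$ reduces to (i) computing $S^+$ explicitly and (ii) comparing the energy of the resulting candidate state with that of the best partially-trivial ground-state sitting in $G\setminus G^+$. The crucial simplification is that for $p=1$ the equations defining $S^+$ become
\[
k_{11}a^2+k_{12}b^2=1,\qquad k_{12}a^2+k_{22}b^2=1,
\]
which is a \emph{linear} system in $(a^2,b^2)$ with matrix $\mathcal{K}=(k_{ij})$.

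First I would analyse $S^+$. If $\det\mathcal{K}=k_{11}k_{22}-k_{12}^2\neq0$, Cramer's rule gives the unique candidate $a^2=(k_{22}-k_{12})/\det\mathcal{K}$, $b^2=(k_{11}-k_{12})/\det\mathcal{K}$, and an elementary sign check shows this lies in $(\real^+)^2$ precisely when $k_{12}<\min\{k_{11},k_{22}\}$ (which automatically forces $\det\mathcal{K}>0$) or $k_{12}>\max\{k_{11},k_{22}\}$ (which automatically forces $\det\mathcal{K}<0$); in both regimes $S^+$ is the single point~\eqref{raizes}. If instead $\min\{k_{11},k_{22}\}<k_{12}<\max\{k_{11},k_{22}\}$, the two numerators have opposite signs, so one of $a^2,b^2$ is negative whatever the sign of $\det\mathcal{K}$ and $S^+=\emptyset$; the endpoint cases $k_{12}=k_{11}$ or $k_{12}=k_{22}$ with $k_{11}\neq k_{22}$ force $b=0$ (resp.\ $a=0$) by subtracting the two equations; and $\det\mathcal{K}=0$ with $k_{11}\neq k_{22}$ renders the system inconsistent. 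Via theorem~\ref{atractivoM} this already proves part~1 ($S^+=\emptyset\Rightarrow G^+=\emptyset$) and the formula~\eqref{raizes} of part~2.

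Next I would decide between $G^+=\emptyset$ and $G=G^+$ in part~2. For $p=1$ the scalar ground-states of the decoupled equations are $k_{11}^{-1/2}Q$ and $k_{22}^{-1/2}Q$, so any bound-state of (M-NLS) with a vanishing component has $I\ge(\max\{k_{11},k_{22}\})^{-1}I_1(Q)$, the bound being attained, whereas the candidate from $S^+$ has $I=(a^2+b^2)I_1(Q)$. A short computation (taking $k_{11}\le k_{22}$, say) gives
\[
a^2+b^2-\frac{1}{k_{22}}=\frac{(k_{22}-k_{12})^2}{k_{22}\,(k_{11}k_{22}-k_{12}^2)},
\]
whose sign is that of $\det\mathcal{K}$: positive when $k_{12}<\min$, negative when $k_{12}>\max$. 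In the first case a hypothetical element of $G^+$ would have strictly larger $I$ than the scalar ground-state, contradicting minimality, so $G^+=\emptyset$; in the second case every partially-trivial bound-state has $I$ strictly above $(a^2+b^2)I_1(Q)\ge\lambda_G$, so no such state is a ground-state, $G\setminus G^+=\emptyset$, and since $G\neq\emptyset$ by theorem~\ref{existencia} we get $G=G^+$. For part~3, when $k_{11}=k_{12}=k_{22}=k$ the system collapses to $a^2+b^2=1/k$, so $S^+$ is the quarter-circle parametrised by $\alpha\in(0,\pi/2)$ and every point of it yields the same energy $k^{-1}I_1(Q)$, which also equals the scalar ground-state energy; hence~\eqref{mina_0b_0} holds for every element of $S^+$ and theorem~\ref{atractivoM} gives exactly the stated description of $G^+$.

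The only delicate point --- more a matter of care than a genuine obstacle --- is the bookkeeping around $G\setminus G^+$: one needs the remark following theorem~\ref{atractivoM} (the nonzero component of a partially-trivial ground-state is itself a scalar ground-state) in order to identify $\min_{U\in G\setminus G^+}I(U)$ with $(\max\{k_{11},k_{22}\})^{-1}I_1(Q)$ whenever $G\setminus G^+\neq\emptyset$, and one must track strict versus non-strict inequalities across the three regimes $k_{12}<\min$, $k_{12}>\max$ and $k_{12}\in[\min,\max]$. Everything else is the linear algebra above together with the homogeneity-and-minimality manipulations already used repeatedly in section~4.
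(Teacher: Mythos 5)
Your proposal is correct and follows essentially the same route as the paper: reduce to the linear system in $(a_0^2,b_0^2)$ given by theorem~\ref{atractivoM}, read off $S^+$ by a sign analysis of the Cramer solution, and decide between $G^+=\emptyset$ and $G=G^+$ by comparing $(a_0^2+b_0^2)I_1(Q)$ with the action $I_1(Q)/\max\{k_{11},k_{22}\}$ of the best semitrivial bound-state. You are in fact slightly more careful than the paper in handling the endpoint cases $k_{12}\in\{k_{11},k_{22}\}$ and the degenerate case $k_{11}k_{22}=k_{12}^2$, but the argument is the same.
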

\begin{proof}
By theorem \ref{atractivoM}, we know that
\ben
\left\{\begin{array}{l}
k_{11}a_0^{2} + k_{12}b_0^{2}=1\\
k_{22}b_0^{2} + k_{12}a_0^{2}=1
\end{array}\right.
\een
Therefore
\ben
\left\{\begin{array}{l}
(k_{11}k_{22} - k_{12}^2)a_0^2 = k_{22} - k_{12}\\
(k_{11}k_{22} - k_{12}^2)b_0^2 = k_{11} - k_{12}
\end{array}\right.
\een
\begin{enumerate}
\item If $k_{11}\neq k_{22}$ and $k_{11}\le k_{12}\le k_{22}$, suppose, without loss of generality, that $k_{11}<k_{12}$. Then
\ben
\frac{a_0^2}{b_0^2} = \frac{k_{22}-k_{12}}{k_{11}-k_{12}} \le 0
\een
which is absurd.

\item If $k_{12}\notin [\min\{k_{11},k_{22}\},\max\{k_{11},k_{22}\}]$, one can explicitly determine the values of $a_0$ and $b_0$, thus obtaining the formulas \ref{raizes}. Suppose, w.l.o.g., that $k_{11}\le k_{22}$. If $k_{12}<k_{22}$, then one easily checks that
$$
I(a_0Q,b_0Q)=\frac{k_{22}-k_{12}}{k_{11}k_{22}-k_{12}^2} + \frac{k_{11}-k_{12}}{k_{11}k_{22}-k_{12}^2}>\frac{1}{k_{22}}=I\left(\frac{1}{\sqrt{k_{22}}}Q\right).
$$
Therefore $G^+=\emptyset$. If $k_{12}>k_{22}$, the above inequality is reversed and one obtains $G=G^+$.
\item If $k_{11}=k_{12}=k_{22}$, then $a_0^2 + b_0^2 = 1/k_{11}$ and so there exists $\alpha\in ]0,\pi/2[$ such that
\ben
(a_0,b_0)=\left(\frac{1}{\sqrt{k_{11}}}\cos \alpha, \frac{1}{\sqrt{k_{11}}}\sin \alpha\right).
\een
On the other hand, any pair of this form is in $S^+$ and has minimal norm. The conclusion follows from theorem \ref{atractivoM}.
\end{enumerate}
\end{proof}

\begin{cor}
Suppose that $k_{11}=k_{22}>0$, $k_{12}>0$. and $(p-2)(pk_{11}-k_{12})>0$. If $(u_0,v_0)\in G^+$, then $a_0=b_0=(k_{11} + k_{12})^{-\frac{1}{2p}}$.
\end{cor}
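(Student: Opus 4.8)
The plan is to use the characterization \eqref{mina_0b_0}. Since $k_{11}=k_{22}$, the symmetric pair $(\tilde a,\tilde a)$ with $\tilde a=(k_{11}+k_{12})^{-\frac1{2p}}$ always belongs to $S^+$. If $(u_0,v_0)\in G^+$ then $(a_0,b_0)\in S^+$ by Theorem \ref{atractivoM}, and \eqref{mina_0b_0} forces $a_0^2+b_0^2\le b_1^2+b_2^2$ for every $(b_1,b_2)\in S^+$. Hence it suffices to prove that $(\tilde a,\tilde a)$ is the \emph{unique} minimizer of $b_1^2+b_2^2$ on $S^+$, that is, that every $(a,b)\in S^+$ with $a\ne b$ satisfies $a^2+b^2>2\tilde a^2$.

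The first step is to classify the points of $S^+$ with $a\ne b$. Assuming $a>b$, subtracting the two equations defining $S^+$ and using $k_{11}=k_{22}$ gives $k_{11}(a^{2p}-b^{2p})=k_{12}(ab)^{p-1}(a^2-b^2)$; dividing by $b^{2p}$ and writing $x=(a/b)^2=e^{2\theta}$ with $\theta>0$, this becomes $\sinh(p\theta)/\sinh\theta=k_{12}/k_{11}$. Since $\frac{d}{d\theta}\log\frac{\sinh(p\theta)}{\sinh\theta}=p\coth(p\theta)-\coth\theta$ has the same sign as $g(p\theta)-g(\theta)$ with $g(t)=t\coth t$ strictly increasing on $(0,\infty)$, the map $\theta\mapsto\sinh(p\theta)/\sinh\theta$ is strictly increasing if $p>1$ (from $p$ to $+\infty$), strictly decreasing if $p<1$ (from $p$ to $0$), and $\equiv1$ if $p=1$. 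So $S^+$ contains a point with $a\ne b$ exactly when $p>1$ and $k_{12}>pk_{11}$, or $p<1$ and $0<k_{12}<pk_{11}$. Comparing with the hypothesis $(p-2)(pk_{11}-k_{12})>0$, the only case in which this can occur is $1<p<2$ and $k_{12}>pk_{11}$; in every other case covered by the hypothesis $S^+=\{(\tilde a,\tilde a)\}$ and the conclusion is immediate.

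In the remaining case I would compute $a^2+b^2$ explicitly. From the first defining equation, $b^{2p}\big(k_{11}x^p+k_{12}x^{\frac{p-1}2}\big)=1$, while the relation above reads $k_{11}x^p+k_{12}x^{\frac{p-1}2}=k_{11}+k_{12}x^{\frac{p+1}2}$; hence $a^2+b^2=(x+1)\big(k_{11}x^p+k_{12}x^{\frac{p-1}2}\big)^{-1/p}$, and $a^2+b^2>2\tilde a^2$ is equivalent to $(k_{11}+k_{12})(x+1)^p>2^p\big(k_{11}x^p+k_{12}x^{\frac{p-1}2}\big)$. Substituting $x=e^{2\theta}$ (so $x+1=2e^{\theta}\cosh\theta$, $1+x^p=2e^{p\theta}\cosh(p\theta)$, $x^{\frac{p-1}2}(1+x)=2e^{p\theta}\cosh\theta$), averaging the two forms of $k_{11}x^p+k_{12}x^{\frac{p-1}2}$, and inserting $k_{12}/k_{11}=\sinh(p\theta)/\sinh\theta$, this inequality reduces — using $\sinh\theta+\sinh(p\theta)=2\sinh\frac{(p+1)\theta}2\cosh\frac{(p-1)\theta}2$ and $\sinh((p+1)\theta)=2\sinh\frac{(p+1)\theta}2\cosh\frac{(p+1)\theta}2$ — to the single inequality
\[
\cosh^{p}\theta\,\cosh\!\Big(\tfrac{(p-1)\theta}2\Big)>\cosh\!\Big(\tfrac{(p+1)\theta}2\Big),\qquad \theta>0,\ p>1.
\]

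This hyperbolic inequality is the heart of the matter, but it has a short proof. Put $\mu=\frac{p-1}2>0$; dividing by $\cosh(\mu\theta)\cosh\theta$ and using $\cosh((\mu+1)\theta)=\cosh(\mu\theta)\cosh\theta+\sinh(\mu\theta)\sinh\theta$ rewrites it as $\cosh^{2\mu}\theta>1+\tanh(\mu\theta)\tanh\theta$. Since $\tanh(\mu\theta)<\mu\theta$ for $\theta>0$, it is enough to show $1+\mu\theta\tanh\theta\le\cosh^{2\mu}\theta$; and $\log(1+\mu\theta\tanh\theta)\le\mu\theta\tanh\theta\le 2\mu\log\cosh\theta=\log\cosh^{2\mu}\theta$, where the middle inequality is $\theta\tanh\theta\le 2\log\cosh\theta$ — immediate because $2\log\cosh\theta-\theta\tanh\theta$ vanishes at $0$ with derivative $(\tfrac12\sinh2\theta-\theta)/\cosh^2\theta\ge0$. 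This yields $a^2+b^2>2\tilde a^2$, contradicting the minimality of $(a_0,b_0)$ in $S^+$; therefore $a_0=b_0=\tilde a=(k_{11}+k_{12})^{-\frac1{2p}}$. I expect the genuinely delicate part to be the bookkeeping — correctly matching the "asymmetric point exists" regime against the hypothesis, and carrying the constraint $k_{12}/k_{11}=\sinh(p\theta)/\sinh\theta$ through the simplification of $a^2+b^2$ — rather than the final hyperbolic inequality itself.
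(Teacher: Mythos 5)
Your proof is correct, and it takes a genuinely different --- and in one regime, strictly stronger --- route than the paper's. Both arguments start from the $S^+$ system of Theorem \ref{atractivoM} and subtract the two equations, but from there they diverge. The paper sets $x=a_0/b_0$ and argues that $f(x)=k_{11}x^{2p}-k_{11}+k_{12}(x^{p-1}-x^{p+1})$ has no zero other than $x=1$ (bounding the number of zeros via $f'$ and $g'$, and using the sign of $f'(1)=2(pk_{11}-k_{12})$ together with the behaviour of $f$ at infinity), i.e.\ that $S^+$ reduces to the symmetric point, after which the value of $a_0$ drops out. You instead parametrize the ratio hyperbolically, $\sinh(p\theta)/\sinh\theta=k_{12}/k_{11}$, which identifies exactly when asymmetric points of $S^+$ exist, observe that under the hypothesis this is possible only for $1<p<2$ and $k_{12}>pk_{11}$, and in that remaining case fall back on the minimality clause \eqref{mina_0b_0}, proving that the symmetric point is still the strict minimizer of $b_1^2+b_2^2$ on $S^+$ via the inequality $\cosh^p\theta\,\cosh\bigl(\tfrac{(p-1)\theta}{2}\bigr)>\cosh\bigl(\tfrac{(p+1)\theta}{2}\bigr)$. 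This extra step is not redundant: the paper's assertion that $f(x)\to-\infty$ as $x\to\infty$ when $p<2$ fails for $1<p<2$ (the $k_{11}x^{2p}$ term dominates $k_{12}x^{p+1}$), and in that regime $S^+$ genuinely contains asymmetric points --- e.g.\ for $p=3/2$, $k_{11}=k_{22}=1$, $k_{12}=2$ one finds a solution with $a_0/b_0\approx 3.54$ and $a_0^2+b_0^2\approx 1.02>2(k_{11}+k_{12})^{-1/p}\approx 0.96$. So the corollary is true as stated, but the paper's argument really only covers $p>2$ and $p\le 1$, whereas yours closes the gap at $1<p<2$ by actually invoking the variational selection in \eqref{mina_0b_0}; the price is the explicit hyperbolic computation. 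The only small imprecision is that your ``exactly when'' classification of asymmetric points omits the degenerate case $p=1$, $k_{11}=k_{12}$ (where every ratio solves the subtracted equation), but that case is excluded by the hypothesis anyway.
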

\begin{proof}
Again by theorem \ref{atractivoM},
\ben
\left\{\begin{array}{l}
k_{11}a_0^{2p} + k_{12}a_0^{p-1}b_0^{p+1}=1\\
k_{22}b_0^{2p} + k_{12}b_0^{p-1}a_0^{p+1}=1
\end{array}\right.
\een
Taking the difference between the two equations and dividing by $b_0^{2p}$,
\ben
k_{11}\left(\frac{a_0}{b_0}\right)^{2p}  - k_{11} + k_{12}\left(\left(\frac{a_0}{b_0}\right)^{p-1} - \left(\frac{a_0}{b_0}\right)^{p+1}\right) = 0.
\een
Consider the function $f(x)=k_{11}x^{2p} - k_{11} + k_{12}(x^{p-1} - x^{p+1}),\ x>0$. It is clear that $f(1)=0$ and $f(0)<0$. We want to see that $f$ does not have zeroes on both sides of $1$.
One has
\begin{align*}
f'(x)&= 2pk_{11}x^{2p-1} + k_{12}((p-1)x^{p-2} - (p+1)x^p)=x^{p-2}\left(2pk_{11}x^{p+1} + k_{12}((p-1) - (p+1)x^2)\right)\\&=: x^{p-2}g(x)
\end{align*}
and
\ben
g'(x)=2p(p+1)k_{11}x^p - 2(p+1)k_{12}x.
\een
Clearly
\ben
g'(x)=0 \Leftrightarrow x=\left(\frac{k_{12}}{pk_{11}}\right)^{\frac{1}{p-1}}.
\een

Since $g'$ has a unique zero, $f$ has at most three (counting multiplicities), one of which $x=1$. If $p>2$, since $f(x)\to\infty$ when $x\to\infty$ and $f'(1)=g(1)=2(pk_{11}-k_{12})>0$, all the zeroes of $f$ have to be on the same side with respect to $x=1$, as we wanted. If $p<2$, since $f(x)\to -\infty$ when $x\to\infty$ and $f'(1)=g(1)=2(pk_{11}-k_{12})<0$, we obtain the same conclusion.

Suppose, without loss of generality, that $f$ has no zeroes on $]0,1[$. It follows that $f(x)=0$ implies $x\le1$ and so $a_0\le b_0$. By the symmetry of the system, $a_0\ge b_0$. Hence $a_0=b_0$. The value of $a_0$ can now be determined from the system.
\end{proof}

\begin{nota}
In the case $p<2$ and $pk_{11}-k_{12}>0$, one may easily check that the function $f$ in the above proof has three distinct zeroes $x_0,1$ and $x_0^{-1}$.
\end{nota}

To conclude this section, we prove the following result:
\begin{prop}\label{propnguyen}
Fix $M\ge 2$, $p\ge 2$ and suppose that, for each $1\le i\le M$, $k_{ii}>0$ and $k_{ij}\ge 0, j\neq i$. If $\beta=\max_{i\neq j} k_{ij}$ is sufficiently small, then, letting $\mathcal{I}$ be the set of $i_0$'s such that $k_{i_0i_0}^{-\frac{1}{p+1}}= \min_i k_{ii}^{-\frac{1}{p+1}}$ and, for any $i_0\in \mathcal{I}$, $\mathcal{Q}_{i_0}\in (H^1(\real^N))^M$ defined by $(\mathcal{Q})_i=0$ if $i\neq i_0$ and $(\mathcal{Q})_{i_0}=k_{i_0i_0}^{-\frac{1}{p+1}}Q$ (recall lemma \ref{unicidadeQ}), one has
\ben\label{caracG}
G=\{e^{i\theta}\mathcal{Q}_{i_0}(\cdot+y),\ i_0\in \mathcal{I}, \theta\in\real, y\in \real^N\}.
\een
\end{prop}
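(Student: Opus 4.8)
The plan is to use Theorem~\ref{atractivoM} and the Remark after it to reduce the determination of $G$ to a finite-dimensional algebraic minimization, and then to show that when $\beta$ is small no competitor with two or more nonzero components can be optimal.

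\emph{Step 1 (reduction).} I would first show, by induction on $M$, that every ground-state of a system with $k_{ij}\ge0$ $(i\ne j)$ and $k_{ii}>0$ has the form $(a_ie^{i\theta_i}Q(\cdot+y))_{1\le i\le M}$ for some $\theta_i\in\real$, $y\in\real^N$, a nonempty support $T=\{i:a_i\ne0\}$ and $(a_i)_{i\in T}\in S^+_T:=\{(b_i)_{i\in T}\in(\real^+)^T:\ \sum_{j\in T}k_{ij}b_i^{p-1}b_j^{p+1}=1,\ i\in T\}$. For $|T|=1$ this is Lemma~\ref{unicidadeQ} after the scaling $Q\mapsto k_{ii}^{-1/(2p)}Q$; for $|T|\ge2$, a fully nontrivial ground-state has this form by Step~1 of the proof of Theorem~\ref{atractivoM}, while a ground-state with some vanishing component is, by the Remark after Theorem~\ref{atractivoM}, a ground-state of a strictly smaller subsystem (still satisfying (P1), since its diagonal entries are positive), so the induction hypothesis applies. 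Conversely, every vector of this form solves \eqref{BS} (trivially for $i\notin T$, and on $i\in T$ by the relations defining $S^+_T$) and has action $\big(\tfrac12-\tfrac1{2p+2}\big)I_1(Q)\sum_{i\in T}a_i^2$. Since a ground-state is exactly a bound-state of minimal action, $G$ equals the set of such vectors for which $\sum_{i\in T}a_i^2$ attains
\be
\mu:=\min\Big\{\sum_{i\in T}b_i^2:\ \emptyset\ne T\subseteq\{1,\dots,M\},\ (b_i)_{i\in T}\in S^+_T\Big\}.
\ee

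\emph{Step 2 (the algebraic estimate).} For a singleton $T=\{i_0\}$ one has $S^+_{\{i_0\}}=\{k_{i_0i_0}^{-1/(2p)}\}$, so $\sum b_i^2=k_{i_0i_0}^{-1/p}$; hence the minimum over singletons is $(\max_\ell k_{\ell\ell})^{-1/p}$, attained precisely for $i_0\in\mathcal I$, and the associated vectors are the $\mathcal Q_{i_0}$. I would then show that for small $\beta$ every $T$ with $|T|\ge2$ is strictly worse. Given $(a_i)_{i\in T}\in S^+_T$, discarding the nonnegative off-diagonal terms in the $i$-th relation gives $k_{ii}a_i^{2p}\le1$, so $a_i\le A:=(\min_\ell k_{\ell\ell})^{-1/(2p)}$ for all $i\in T$. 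Feeding this back in, and using $|T|-1\le M-1$ together with $p\ge2$ (whence $a_i^{p-1}\le A^{p-1}$),
\be
1=k_{ii}a_i^{2p}+\sum_{j\in T\setminus\{i\}}k_{ij}\,a_i^{p-1}a_j^{p+1}\ \le\ k_{ii}a_i^{2p}+\beta(M-1)A^{2p},
\ee
so, provided $\beta(M-1)A^{2p}<1$, each $a_i$ $(i\in T)$ satisfies $a_i^2\ge\big((1-\beta(M-1)A^{2p})/\max_\ell k_{\ell\ell}\big)^{1/p}$, and summing over the at least two elements of $T$,
\be
\sum_{i\in T}a_i^2\ \ge\ 2\Big(\frac{1-\beta(M-1)A^{2p}}{\max_\ell k_{\ell\ell}}\Big)^{1/p}.
\ee

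\emph{Step 3 (conclusion).} The last right-hand side tends to $2(\max_\ell k_{\ell\ell})^{-1/p}$ as $\beta\to0$, hence exceeds the singleton value $(\max_\ell k_{\ell\ell})^{-1/p}$ as soon as $2^p\big(1-\beta(M-1)A^{2p}\big)>1$, i.e. for $\beta<\beta_0:=(1-2^{-p})/\big((M-1)A^{2p}\big)$. For such $\beta$ the minimum $\mu$ is attained only at singletons $\{i_0\}$ with $i_0\in\mathcal I$, and Step~1 then gives $G=\{e^{i\theta}\mathcal Q_{i_0}(\cdot+y):\ i_0\in\mathcal I,\ \theta\in\real,\ y\in\real^N\}$, which is \eqref{caracG}.

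\emph{Main difficulty.} I expect the delicate point to be the uniform lower bound on the amplitudes $a_i$, $i\in T$, when $|T|\ge2$: this is where both hypotheses genuinely enter — $p\ge2$ forces the whole left-hand side of the $i$-th relation of $S^+_T$ to vanish as $a_i\to0$, and the smallness of $\beta$ prevents the couplings from propping up a small $a_i$. It is precisely this bound that makes carrying two or more nonzero components strictly costlier (in $\sum_{i\in T}a_i^2$, equivalently in action) than carrying one, forcing the ground-states to concentrate in a single component, necessarily one with maximal self-coupling.
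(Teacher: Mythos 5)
Your proof is correct, and while it shares the paper's overall skeleton (reduce via Theorem~\ref{atractivoM} and the remark following it, by induction on the number of nonzero components, to the algebraic systems $S^+_T$, then show that for small $\beta$ carrying two or more components is strictly more expensive in $\sum a_i^2$ than carrying one), the key quantitative step is handled genuinely differently. The paper introduces the map $F(D,A)$ and applies the implicit function theorem at the decoupled solution $A^1$ to get a unique branch $A(D)$ near $A^1$, and then still needs a separate a priori estimate (this is where it invokes $p\ge2$) to guarantee that \emph{every} solution of $F(D,A)=1$ lies in that neighbourhood; you bypass the implicit function theorem entirely and prove the needed localization directly, via the elementary two-sided bound $k_{ii}a_i^{2p}\le 1$ and $k_{ii}a_i^{2p}\ge 1-\beta(M-1)A^{2p}$. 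This buys you an explicit threshold $\beta_0=(1-2^{-p})/\bigl((M-1)A^{2p}\bigr)$ where the paper only asserts ``sufficiently small,'' and your argument in fact only uses $p\ge1$ (monotonicity of $x\mapsto x^{p-1}$), so it slightly relaxes the hypothesis; what it gives up is the uniqueness of the fully nontrivial candidate $A(D)$ in $S^+$, which the paper's IFT argument provides but which is not needed for the conclusion. Two minor remarks: your amplitude $k_{i_0i_0}^{-1/(2p)}$ for the singleton is the one actually forced by the relation $k_{i_0i_0}b_{i_0}^{2p}=1$ (the exponent $-1/(p+1)$ in the statement of the proposition appears to be a typo carried over from the paper), and in Step~1 you should state explicitly, as you implicitly use, that the attainment of the minimum $\mu$ follows from Theorem~\ref{existencia} rather than from compactness of the sets $S^+_T$.
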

\begin{proof}
Set $A^1=(k_{ii}^{-\frac{1}{2p+2}})_{1\le i\le M}$ and
$\mathcal{S}_0$ the vector space of symmetrical matrices $M\times M$ with zero diagonal, equipped with the $l^\infty$ norm.
Consider $F:\mathcal{S}_0\times \real^M\to \real^M$,
\ben
F_i(D, A)= k_{ii}a_i^{2p+2} + \sum_{j=1,\ i\neq j}^M d_{ij}a_i^{p-1}a_j^{p+1},\ D=(d_{ij}), \ A=(a_i).
\een
Then $F(0,A^1)=0$, $F$ is $C^1$ and it is easy to see that the jacobian of $F$ with respect to $A$ in $A^1$ is nonzero. By the implicit function theorem, if $\|D\|_{\mathcal{S}_0}<\delta$, there exists a unique solution of $F(D,A)=1$, called $A(D)$, and there exists $\epsilon>0$ small enough such that $\|A(D)-A^1\|_{\real^M}<\epsilon$. Consequently
\ben
\sum_{i=1}^M (A(D))_i^2 \ge  \sum_{i=1}^M (A^1)_i^2 - \epsilon = \sum_{i=1}^M k_{ii}^{-\frac{1}{p+1}} -\epsilon > \min_i\{k_{ii}^{-\frac{1}{p+1}}\},
\een
for $\epsilon$ small. Moreover, since $p\ge 2$, one easily checks that, when $\beta$ is sufficiently small, any solution of $F(D,A)=1$ must satisfy $\|A-A^1\|_{\real^M}<\epsilon$.

If there existed $U^0\in G^+$, by theorem \ref{atractivoM}, $U^0$ would be of the form
\ben
U^0=(a_ie^{i\theta_i}Q(\cdot+y))_{1\le i\le M}, \ A^0=(a_i)_{1\le i\le M}\in S^+
\een
and $A^0$ would be a solution of \eqref{mina_0b_0}.  By uniqueness, $A^0=A(D)$, if $\|D\|_{\mathcal{S}_0}=\max_{i\neq j}\{k_{ij}\}<\delta$. Therefore
\ben
I(\mathcal{Q})<\sum_{i=1}^M (A(D))_i^2 I_1(Q) = I(U^0)
\een
which contradicts $U^0\in G$. Therefore $G^+$ is empty.

If there exists $U^0\in G$ with at least two nonzero components, the vector of nonzero components of  $U^0$, $(U^0)^+$, has to be a fully nontrivial ground-state for a  (L-NLS) system, with $2\le L\le M$. Applying the above argument, we obtain a contradiction. Therefore any ground-state has exactly one nontrivial component, which must be a scalar multiple of $Q$. A simple comparison of the action of such solutions proves the characterization \eqref{caracG}.
\end{proof}
\end{section}
\begin{section}{Applications to the (M-NLS) system}
%Until the end of this section, we shall suppose that the hypothesis of theorem \ref{repulsivoM} is true, that is, that there exists a partition $\{Y_k\}_{1\le k\le K}$ of $\{1,...,M\}$ such that, given $1\le i\neq j\le M$,
%$$
%k_{ij} \ge 0 \mbox{ if and only if } \exists k: i,j\in Y_k.
%$$ 
We recall that we are always assuming (P1). Define
\ben
D=\{U\in (H^1(\real^N)^M: J(U)>0\}
\een
and, for each $U\in (H^1(\real^N))^M$,
\ben
M(U)=\sum_{i=1}^M \|u_i\|_2^2, \quad T(U)=\sum_{i=1}^M \|\nabla u_i\|_2^2, \quad 
E(U)=\frac{1}{2}T(U) -\frac{1}{2p+2}J(U)
\een
\ben
GN(U)=\frac{M(U)^{p+1-\frac{Np}{2}}T(U)^{\frac{Np}{2}}}{J(U)}.
\een

\begin{prop}
The set of solutions for the minimization problem 
\ben\label{mingagliardo}
GN(U)=\min_{W\in D} GN(W), \quad U\in D
\een
is $G$, up to scalar multiplication and scaling.
\end{prop}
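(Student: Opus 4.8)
The plan is to reduce \eqref{mingagliardo} to the minimization problem \eqref{minlambdaG} via the scale invariance of $GN$. First I would record that $GN$ is invariant under the two‑parameter family of rescalings $W\mapsto aW(b\,\cdot)$, $a,b>0$: since $M\mapsto a^2b^{-N}M$, $T\mapsto a^2b^{2-N}T$, $J\mapsto a^{2p+2}b^{-N}J$, substituting into $GN$ the powers of $a$ and of $b$ cancel exactly (the $a$‑check being $2(p+1-\tfrac{Np}2)+2\cdot\tfrac{Np}2=2p+2$ and the $b$‑check $-N(p+1-\tfrac{Np}2)+(2-N)\tfrac{Np}2=-N$). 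Hence $GN$ is constant on each rescaling orbit, so it suffices to pick a good representative of each orbit meeting $D$ and to compute $GN$ there.

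Fix $W\in D$. As $J(W)>0$ forces $W\not\equiv0$, both $M(W)>0$ and $T(W)>0$, so I can choose a dilation making $T/M$ equal to $\rho:=\tfrac{Np}{2p+2-Np}$ (note $2p+2-Np>0$ in the admissible range of $p$, so $\rho\in(0,\infty)$) and then a scalar factor making $J=\lambda_G$; call the resulting element $\tilde W$, so $T(\tilde W)=\rho\,M(\tilde W)$, $J(\tilde W)=\lambda_G$. Since $J(\tilde W)=\lambda_G$, the definition of $\lambda_G$ together with the homogeneity $I^\lambda=\lambda^{1/(p+1)}I^1$ (whence $I^{\lambda_G}=\lambda_G$) gives $I(\tilde W)\ge\lambda_G$; as $I(\tilde W)=M(\tilde W)(1+\rho)$, this forces $M(\tilde W)\ge m^*:=\lambda_G/(1+\rho)$. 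Consequently
\[
GN(W)=GN(\tilde W)=\frac{\rho^{\,Np/2}\,M(\tilde W)^{p+1}}{\lambda_G}\ \ge\ \frac{\rho^{\,Np/2}\,(m^*)^{p+1}}{\lambda_G}=:\mathcal C,
\]
and, since $p+1>0$, equality holds iff $M(\tilde W)=m^*$, i.e. iff $I(\tilde W)=\lambda_G=I^{\lambda_G}$, i.e. (as $J(\tilde W)=\lambda_G$ already) iff $\tilde W$ solves \eqref{minlambdaG}.

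It remains to see that $\mathcal C$ is attained and exactly where. For this I would show every $U_0\in G$ satisfies $M(U_0)=m^*$, $T(U_0)=\rho\,m^*$ (the Derrick/Pohozaev balance). By Lemma~\ref{caracterizacao}, $U_0$ minimizes $I$ over $\{J=\lambda_G\}$, hence also along the admissible curve $b\mapsto b^{N/(2p+2)}U_0(b\,\cdot)$, which stays in $\{J=\lambda_G\}$ and equals $U_0$ at $b=1$. Writing $I$ along this curve as $b^{-Np/(p+1)}M(U_0)+b^{2-Np/(p+1)}T(U_0)$ and imposing that its derivative vanish at the interior minimum $b=1$ gives $Np\,M(U_0)=(2p+2-Np)\,T(U_0)$, i.e. $T(U_0)/M(U_0)=\rho$; combining with $M(U_0)+T(U_0)=I(U_0)=J(U_0)=\lambda_G$ (the identity $I=J$ holds on bound‑states, and $J=\lambda_G$ on ground‑states) yields $M(U_0)=m^*$. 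Therefore $GN(U_0)=\rho^{\,Np/2}(m^*)^{p+1}/\lambda_G=\mathcal C$, so by the previous step $\min_D GN=\mathcal C$, and by scale invariance every $aU_0(b\,\cdot)$ with $U_0\in G$, $a,b>0$, solves \eqref{mingagliardo}.

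Putting the pieces together: $W\in D$ solves \eqref{mingagliardo} iff $GN(W)=\mathcal C$, iff the normalized representative $\tilde W$ of its rescaling orbit solves \eqref{minlambdaG}, iff $\tilde W\in G$ by Lemma~\ref{caracterizacao}; since $W$ is a positive scalar multiple of a dilation of $\tilde W$, this is precisely the claim that the solution set of \eqref{mingagliardo} is $G$ modulo scalar multiplication and scaling. The one step that needs genuine care is the balance identity for ground‑states (checking that the scaling curve is admissible and that $p+1-\tfrac{Np}2>0$, so that the equality case in the lower bound is genuinely equivalent to $M(\tilde W)=m^*$); everything else is bookkeeping with the homogeneities of $M$, $T$ and $J$.
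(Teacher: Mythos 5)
Your proof is correct, and it reaches the same reduction --- scale out the two-parameter group $W\mapsto aW(b\,\cdot)$ under which $GN$ is invariant, then invoke Lemma \ref{caracterizacao} --- but via a different normalization than the paper. The paper fixes a reference ground state $\mathcal{Q}$ and rescales an arbitrary $W\in D$ to $Z$ with $M(Z)=M(\mathcal{Q})$ and $J(Z)=J(\mathcal{Q})=\lambda_G$; then $I(\mathcal{Q})\le I(Z)$ is literally $T(\mathcal{Q})\le T(Z)$, which gives $GN(\mathcal{Q})\le GN(Z)=GN(W)$ at once, and the equality case falls out with no further input. You instead normalize to $T/M=\rho$ and $J=\lambda_G$, which turns $GN$ into the monotone expression $\rho^{Np/2}M(\tilde W)^{p+1}/\lambda_G$ and reduces everything to the single scalar inequality $M(\tilde W)\ge \lambda_G/(1+\rho)$; the price is that you must separately verify that ground states actually sit at the ratio $T/M=\rho$, which you do correctly via the Derrick-type identity obtained by differentiating $I$ along the $J$-preserving curve $b\mapsto b^{N/(2p+2)}U_0(b\,\cdot)$ (this is consistent with the Pohozaev identity the paper records later, in the critical case, and your verification that $b=1$ is an interior critical point of the restriction is the right justification). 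What your route buys is a closed-form value of the minimum, $\mathcal{C}=\rho^{Np/2}\lambda_G^{p}/(1+\rho)^{p+1}$, i.e.\ an explicit formula for $C_M$ in terms of $\lambda_G$ alone; what it costs is the extra balance identity that the paper's choice of normalization sidesteps. Your remark that $2p+2-Np>0$ is needed is well taken --- it is exactly the condition under which the exponent of $M$ in $GN$ is positive, and it is implicitly assumed throughout the paper's section on the Gagliardo--Nirenberg inequality.
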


\begin{proof}
By lemma \ref{caracterizacao} and theorem \ref{existencia}, we know that $G\neq\emptyset$ is the set of solutions of
\ben
I(U)=\min_{J(W)=\lambda_G} I(W),\quad J(U)=\lambda_G.
\een
Let $\mathcal{Q}\in G$ and $W\in D$. Since $I(\mathcal{Q})=J(\mathcal{Q})>0$, we have $\mathcal{Q}\in D$. Define
\ben
\nu = \left(\frac{J(\mathcal{Q})M(W)}{M(\mathcal{Q})J(W)}\right)^{\frac{1}{2p}}
\een
and
\ben
\zeta = \left(\nu^2\left(\frac{M(W)}{M(\mathcal{Q})}\right)\right)^{\frac{1}{N}}.
\een
Then $Z(x)=\nu W(\zeta x)$ satisfies
\ben
J(Z)=J(\mathcal{Q}), \quad M(Z)=M(\mathcal{Q}) \quad G(Z)=G(W).
\een
By the minimality of $\mathcal{Q}$, $I(\mathcal{Q})\le I(Z)$, which implies that $GN(\mathcal{Q})\le GN(Z)=GN(W)$. Therefore $\mathcal{Q}$ is a solution of  \eqref{mingagliardo}. On the other hand, if $W$ is a solution of \eqref{mingagliardo}, then one has necessarily $GN(Z)=GN(\mathcal{Q})$, which implies that $I(Z)=I(\mathcal{Q})$. Therefore $Z\in G$, which concludes our proof.
\end{proof}

Set
\ben
C_M = GN(\mathcal{Q})^{-1}, \ \mathcal{Q}\in G.
\een

\begin{cor}
The optimal constant for the vector-valued Gagliardo-Nirenberg inequality
\ben\label{GN}
\sum_{i,j=1}^M k_{ij}\|u_iu_j\|_{p+1}^{p+1} \le C\left(\sum_{i=1}^M \|u_i\|_2^2\right)^{p+1-\frac{Np}{2}}\left(\sum_{i=1}^M \|\nabla u_i\|_2^2\right)^{\frac{Np}{2}},\ U=(u_1,...,u_M)\in (H^1(\real^N))^M
\een
is $C_M$.
\end{cor}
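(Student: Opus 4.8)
The plan is to read off the optimal constant directly from the minimization problem \eqref{mingagliardo}. First I would observe that, for a fixed $C\ge 0$, the inequality \eqref{GN} holds for every $U\in(H^1(\real^N))^M$ if and only if it holds for every $U\in D$: for $U\notin D$ the left-hand side $\sum_{i,j}k_{ij}\|u_iu_j\|_{p+1}^{p+1}=J(U)$ is $\le 0$ while the right-hand side is $\ge 0$, so such $U$ impose no constraint. For $U\in D$ one has $J(U)>0$, and \eqref{GN} rearranges exactly to $GN(U)^{-1}\le C$. Hence the smallest admissible constant in \eqref{GN} is
$$
\sup_{U\in D}\frac{1}{GN(U)}=\left(\inf_{U\in D}GN(U)\right)^{-1}.
$$

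It then remains only to identify $\inf_{U\in D}GN(U)$. By (P1) and Theorem \ref{existencia} we have $G\neq\emptyset$, and by the preceding Proposition every $\mathcal{Q}\in G$ is a solution of \eqref{mingagliardo}, i.e.\ $GN(\mathcal{Q})=\inf_{W\in D}GN(W)$; moreover $\mathcal{Q}\in D$ together with $\mathcal{Q}\neq0$ and $p<4/(N-2)^+$ (Sobolev embedding) forces $0<GN(\mathcal{Q})<\infty$, so the infimum is a genuine positive minimum. Therefore the optimal constant is $GN(\mathcal{Q})^{-1}=C_M$, and — again by the Proposition — equality in \eqref{GN} holds precisely for $U\in G$ modulo scalar multiplication and scaling.

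I do not expect a real obstacle here: the analytic content has already been carried out in establishing Theorem \ref{existencia} and the Proposition characterizing the minimizers of $GN$. The only point requiring a line of care is the reduction from all $U\in(H^1(\real^N))^M$ to $U\in D$, which uses the trivial bound $J(U)\le 0\le$ RHS on the complement of $D$; after that the statement is a one-line reciprocal of the Proposition.
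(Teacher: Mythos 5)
Your argument is correct and is exactly the reasoning the paper leaves implicit: the corollary is stated without proof as an immediate consequence of the preceding proposition, and your rearrangement of \eqref{GN} into $C\ge GN(U)^{-1}$ on $D$ (plus the trivial observation that $U\notin D$ imposes no constraint) is the intended one-line deduction. No gaps.
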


\begin{nota}
Using proposition \ref{propnguyen}, we can determine, in particular, the constant $C_M$ presented by Nguyen et al. (\cite{nguyen}).
\end{nota}

We now focus on the \textbf{critical case} $p=2/N$. 
\begin{nota}
Let $\mathcal{Q}\in A$. The Pohozaev identity
\ben
\frac{N-2}{2}T(\mathcal{Q}) + \frac{N}{2}M(\mathcal{Q}) = \frac{N}{2p+2}J(\mathcal{Q}),
\een
together with $T(\mathcal{Q})+M(\mathcal{Q})=I(\mathcal{Q})=J(\mathcal{Q})$ and $p=2/N$, implies that
\ben
E(\mathcal{Q})=0.
\een
Therefore
\ben
GN(\mathcal{Q})=\frac{M(\mathcal{Q})^{\frac{2}{N}}}{p+1}, \forall \mathcal{Q}\in G.
\een
\end{nota}

From the vector-valued Gagliardo-Nirenberg inequality, we have the following optimal global existence result for (M-NLS):
\begin{prop}\label{global}
Suppose that $V_0\in (H^1(\real^N))^M$ is such that
\ben
M(V_0) < \left(\frac{p+1}{C_M}\right)^{\frac{N}{2}} = M(\mathcal{Q}),
\een
with $\mathcal{Q}\in G$. Then $T_{max}(V_0)=\infty$.
\end{prop}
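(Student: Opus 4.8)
The plan is to run the standard energy-plus-mass conservation argument adapted to the vector-valued critical Gagliardo--Nirenberg inequality. First I would recall that the Cauchy problem is locally well-posed in $(H^1(\real^N))^M$ and that, along the flow, both $M(V(t))=M(V_0)$ and $E(V(t))=E(V_0)$ are conserved. By the blow-up alternative stated in the introduction, it suffices to show that $T(V(t))$ stays bounded on any finite time interval, equivalently that $\sup_{t<T_{max}}T(V(t))<\infty$.

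Next I would apply the vector-valued Gagliardo--Nirenberg inequality with the optimal constant $C_M$ (the Corollary above): since $p=2/N$ we have $p+1-\tfrac{Np}{2}=1$ and $\tfrac{Np}{2}=1$, so
\ben
J(V(t)) \le C_M\, M(V(t))^{1}\, T(V(t))^{1} = C_M\, M(V_0)\, T(V(t)).
\een
Plugging this into the conserved energy,
\ben
E(V_0)=E(V(t)) = \frac{1}{2}T(V(t)) - \frac{1}{2p+2}J(V(t)) \ge \frac{1}{2}T(V(t)) - \frac{C_M M(V_0)}{2p+2}T(V(t)) = \frac{1}{2}\left(1 - \frac{C_M M(V_0)}{p+1}\right)T(V(t)).
\een
Here is where the hypothesis enters: $M(V_0) < (p+1)/C_M = M(\mathcal{Q})$ gives $1 - \frac{C_M M(V_0)}{p+1} > 0$, so the coefficient of $T(V(t))$ is a fixed positive number, and therefore
\ben
T(V(t)) \le \frac{2E(V_0)}{1 - C_M M(V_0)/(p+1)} \qquad \text{for all } t\in[0,T_{max}).
\een
Thus $\|\nabla V(t)\|_2$ stays bounded, which by the blow-up alternative forces $T_{max}(V_0)=\infty$.

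The identification $(p+1)/C_M)^{N/2} = M(\mathcal{Q})$ for $\mathcal{Q}\in G$ is exactly the computation recorded in the Remark preceding the statement, namely $GN(\mathcal{Q})=M(\mathcal{Q})^{2/N}/(p+1)$ together with $C_M = GN(\mathcal{Q})^{-1}$; I would simply cite it. I do not expect any genuine obstacle: the only subtle point is making sure the exponents in the Gagliardo--Nirenberg inequality collapse correctly at $p=2/N$, which is immediate, and that the energy is genuinely conserved (not merely bounded), which is part of the local well-posedness theory recalled in the introduction. The argument is the direct vector-valued analogue of Weinstein's classical sharp threshold for the scalar critical NLS.
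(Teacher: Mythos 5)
Your argument is the same as the paper's: conservation of $M$ and $E$ along the flow, the sharp vector-valued Gagliardo--Nirenberg inequality to bound $J(V(t))$ by a multiple of $T(V(t))$ with coefficient strictly below $p+1$ under the mass hypothesis, hence a uniform bound on $T(V(t))$ from the conserved energy, and the blow-up alternative to conclude.

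There is one arithmetic slip to fix. At $p=2/N$ one has $\frac{Np}{2}=1$, but $p+1-\frac{Np}{2}=p=\frac{2}{N}$, not $1$. The critical Gagliardo--Nirenberg inequality therefore reads $J(V(t))\le C_M\,M(V_0)^{2/N}\,T(V(t))$, the coefficient in your energy lower bound is $\frac12-\frac{1}{2p+2}C_M M(V_0)^{2/N}$, and the positivity condition is $M(V_0)^{2/N}<(p+1)/C_M$, i.e.\ $M(V_0)<\left(\frac{p+1}{C_M}\right)^{N/2}$ --- which is exactly the hypothesis as stated in the proposition (you dropped the power $N/2$ when quoting it, consistently with the exponent error). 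With the exponent $2/N$ carried through, your proof coincides with the one in the paper.
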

\begin{proof}
It is a well-known fact that the functionals $M$ and $E$ are preserved by the flow generated by (M-NLS). Hence, if $V$ is the solution of (M-NLS) with initial data $V_0$, we have, by \eqref{GN},
\ben
E(V_0)=E(V(t))=\frac{1}{2}T(V(t)) -\frac{1}{2p+2}J(V(t))\ge \left(\frac{1}{2}-\frac{1}{2p+2}C_MM(V_0)^{\frac{2}{N}}\right)T(V(t)).
\een
Therefore $T(V(t))$ is bounded and so $T_{max}(V_0)=\infty$.
\end{proof}

\begin{nota}\label{gradmenorenergia}
It is easy to see that, for any $U\in (H^1(\real^N))^M$,
\ben
\left(1-\frac{1}{p+1}C_M\left(\sum_{i=1}^M \|u_i\|_2^2\right)^{\frac{2}{N}}\right)\left(\sum_{i=1}^M\|\nabla u_i\|_2^2\right)\le 2E(U).
\een
This inequality will be used later.
\end{nota}

The following result is an adaptation of the result in \cite{hmidikeraani} to the vector case.
\begin{lema}
Suppose that $\{U_n\}_{n\in\mathbb{N}}\subset (H^1(\real^N))^M$ verifies
\begin{enumerate}
\item $M(U_n)= C\ \forall n\in \nat$, for some $C>0$;
\item $T(U_n) = C'\ \forall n\in \nat$, for some $C' > 0$;
\item $E(U_n)\to 0$.
\end{enumerate}
Then, given $\delta_0>0$, there exists a subsequence $\{U_{n_k}\}$, $y_k\in\real^N$ and $R>0$ such that
\ben
\sum_{i=1}^M\int_{y_k+B_R} |(U_{n_k})_i|^2 dx \ge M(\mathcal{Q}) - \delta_0,
\een
where $\mathcal{Q}\in G$. If $C=M(\mathcal{Q})$ and $C'=T(\mathcal{Q})$, then $U_n\to U$ in $(H^1(\real^N))^M$ and $U\in G$.
\end{lema}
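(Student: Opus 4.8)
### Proof proposal

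The plan is to follow the classical concentration-compactness argument of Hmidi--Keraani, adapted to vectors. First I would normalize: since $M(U_n)$ and $T(U_n)$ are constant and $E(U_n)\to 0$, the sequence $\{U_n\}$ is bounded in $(H^1(\real^N))^M$ and $J(U_n)\to (p+1)C'$ (from $E(U_n)=\frac12 C'-\frac{1}{2p+2}J(U_n)\to 0$). By Remark~\ref{gradmenorenergia} and $E(U_n)\to 0$, passing to the limit gives $(1-\frac{1}{p+1}C_M C^{2/N})C'\le 0$, hence $C\ge M(\mathcal{Q})=\left(\frac{p+1}{C_M}\right)^{N/2}$. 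Setting $W_n(x)=\rho^{-N/2}\lambda^{N/2}\ldots$ — more precisely rescaling $U_n$ by $U_n\mapsto \nu U_n(\zeta\cdot)$ as in the proof of the Gagliardo--Nirenberg proposition — I would reduce to the case where every $W_n$ has the same $L^2$ norm and the same $J$ value as $\mathcal{Q}$, so that $\{W_n\}$ is (asymptotically) a minimizing sequence for the variational problem \eqref{mingagliardo}, equivalently for \eqref{minlambdaG}.

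Next I would invoke the profile decomposition / concentration-compactness dichotomy exactly as in the proof of Theorem~\ref{existencia}: decompose each $W_n$ into bubbles, show that strict concavity of $\lambda\mapsto I^\lambda=\lambda^{1/(p+1)}I^1$ forces a single nonzero bubble carrying all the mass $\lambda_G$ (vanishing and pure dichotomy are ruled out because they would make $\liminf I(W_n)>I^{\lambda_G}$, contradicting that $W_n$ is minimizing). This produces the sequence of translations $y_k\in\real^N$ and a radius $R$ such that the rescaled mass concentrates: $\sum_i\int_{y_k+B_R}|(W_{n_k})_i|^2\ge M(\mathcal{Q})-\delta_0'$; undoing the (bounded, by the lower bound $C\ge M(\mathcal{Q})$ the scaling factors stay in a compact set) rescaling transfers this back to $U_{n_k}$ with a possibly enlarged $R$ and a $\delta_0$ absorbing the rescaling error. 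This gives the first assertion.

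For the second assertion, assume $C=M(\mathcal{Q})$ and $C'=T(\mathcal{Q})$. Then no rescaling is needed: $\{U_n\}$ itself is a minimizing sequence for \eqref{minlambdaG} with $J(U_n)\to\lambda_G$ (using the Pohozaev remark $E(\mathcal{Q})=0$ together with $T(\mathcal{Q})+M(\mathcal{Q})=J(\mathcal{Q})=I(\mathcal{Q})=\lambda_G$, one checks $J(U_n)\to\lambda_G$ and $I(U_n)\to I^{\lambda_G}$). By the compactness alternative just established, up to translation $U_n(\cdot+y_n)\rightharpoonup U$ with $U_n(\cdot+y_n)\to U$ strongly in $(L^2\cap L^{2p+2})(\real^N)^M$; hence $J(U)=\lambda_G$ and $I(U)\le\liminf I(U_n)=I^{\lambda_G}$, so $U$ is a minimizer of \eqref{minlambdaG} and $U\in G$ by Lemma~\ref{caracterizacao}. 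Finally, strong $L^2$ convergence plus $M(U_n)\to M(U)$ and weak $H^1$ convergence plus $I(U_n)\to I(U)$ force $\|\nabla (U_n)_i\|_2\to\|\nabla u_i\|_2$ for each $i$, which upgrades weak to strong convergence in $(H^1(\real^N))^M$.

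The main obstacle is bookkeeping around the rescaling in the general case: one must verify that the scaling parameters $\nu,\zeta$ built from $M(U_n),J(U_n)$ remain in a fixed compact subset of $(0,\infty)$ so that a fixed radius $R$ (after enlargement) works for the original $U_{n_k}$, and that the error introduced is uniformly $\le\delta_0$. This hinges precisely on the a priori bounds $C\ge M(\mathcal{Q})$ and $J(U_n)\to(p+1)C'$ with $C'$ fixed, which pin the parameters down; once that is in hand the rest is a routine transcription of the scalar Hmidi--Keraani argument and of the concentration-compactness steps already carried out in the proof of Theorem~\ref{existencia}.
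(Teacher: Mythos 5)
There is a genuine gap in your treatment of the first (general) assertion. Your plan is to rescale $U_n$ into a sequence $W_n$ that is asymptotically minimizing for \eqref{mingagliardo} (equivalently for \eqref{minlambdaG}) and then extract a single bubble via strict concavity of $\lambda\mapsto I^\lambda$. But the rescaling $U\mapsto \nu U(\zeta\cdot)$ leaves $GN$ invariant, and since $p=2/N$ one computes
\be
GN(U_n)=\frac{M(U_n)^{2/N}\,T(U_n)}{J(U_n)}\longrightarrow \frac{C^{2/N}C'}{(p+1)C'}=\frac{C^{2/N}}{p+1},
\ee
whereas $GN(\mathcal{Q})=M(\mathcal{Q})^{2/N}/(p+1)$. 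Your own (correct) preliminary observation that $C\ge M(\mathcal{Q})$ therefore shows that $\{W_n\}$ is a minimizing sequence for $GN$ only in the borderline case $C=M(\mathcal{Q})$; for $C>M(\mathcal{Q})$ it is not, the concavity/dichotomy argument does not apply, and the single-bubble conclusion you want to draw is simply false: take $U_n=\mathcal{Q}+\mathcal{Q}(\cdot-ne_1)$ (after a harmless normalization making $M$ and $T$ exactly constant), which satisfies all three hypotheses with $C$ close to $2M(\mathcal{Q})$ but splits into two receding bubbles. Note that the statement only asks for \emph{one} ball carrying mass $M(\mathcal{Q})-\delta_0$, not for compactness modulo translations, so the correct proof must tolerate several clusters.

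The paper argues in the opposite direction: after the bubble decomposition of $U_n$ itself (no rescaling), one assumes for contradiction that \emph{every} cluster satisfies $M(U_n^l)<M(\mathcal{Q})-\delta_0$. Then remark \ref{gradmenorenergia} yields a coercivity constant, uniform in $l$ and $n$, on each cluster, whence
\be
J(U_n)\le \sum_{l} J(U_n^l)+\delta(\epsilon)\lesssim \sum_{l} T(U_n^l)+\delta(\epsilon)\lesssim \sum_{l}\Bigl(1-\tfrac{1}{p+1}C_MM(U_n^l)^{2/N}\Bigr)^{-1}E(U_n^l)+\delta(\epsilon)\lesssim E(U_n)+\delta(\epsilon),
\ee
contradicting $J(U_n)\to (p+1)C'>0$; hence at least one cluster concentrates mass $\ge M(\mathcal{Q})-\delta_0$ in a fixed ball, which is exactly the claim. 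Your argument for the second assertion (the case $C=M(\mathcal{Q})$, $C'=T(\mathcal{Q})$) is essentially the paper's and is correct: there $\{U_n\}$ is genuinely minimizing, only one cluster can survive, the weak limit is identified as an element of $G$ through $M(U)=M(\mathcal{Q})$, $T(U)\le T(\mathcal{Q})$, $J(U)=J(\mathcal{Q})$, and convergence of the norms upgrades weak to strong convergence in $(H^1(\real^N))^M$.
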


\noindent\textit{Sketch of the proof:}
The main ideas are the same as in the proof of theorem \ref{existencia}. Fixed $\epsilon>0$ and $1\le i\le M$, one splits each sequence $\{(U_n)_i\}$ into a set of bubbles $\{(U_n^l)_i\}_{1\le l\le L_i}$ using the concentration-compactness principle. Setting $L:=\max L_i$, define, for each $i$, $(U_n)_i^l=0$ if $L_i<l\le L$. Afterwards, one groups the bubbles into $L$ clusters in the same way as before and define $U_n^l$ as the vector of bubbles from cluster $L$. From the concentration-compactness principle and from the way we grouped the bubbles, we have
\ben
\left|M(U_n) - \sum_{l=1}^L M(U_n^l)\right|<\delta(\epsilon),\ T(U_n) \ge \sum_{l=1}^L T(U_n^l)-\delta(\epsilon),
\een
and

\ben
\left| J(U_n) - \sum_{l=1}^L J(U_n^l)\right|\le \delta(\epsilon).
\een

Now suppose that $M(U_n^l)<\left(\frac{p+1}{C_M}\right)^{\frac{N}{2}}-\delta_0$, for any $l$. Then, by the vector-valued Gagliardo-Nirenberg inequality and by remark \ref{gradmenorenergia},
\begin{align*}
J(U_n) &\le \sum_{l=1}^L J(U_n^l) + \delta(\epsilon) \lesssim \sum_{l=1}^L T(U_n^l) + \delta(\epsilon) \\&\lesssim \sum_{l=1}^L \left(1-\frac{1}{p+1}C_MM(U_n^l)^{\frac{2}{N}}\right)^{-1}E(U_n^l) + \delta(\epsilon) \lesssim E(U^n) + \delta(\epsilon).
\end{align*}
However, $E(U_n)\to 0$ and $J(U_n)\to (p+1)C'$, which is absurd. This proves the first part of the result.

If $C=\left(\frac{p+1}{C_M}\right)^{\frac{N}{2}}$, then the above argument shows that there can only exist one cluster and that all the components of the sequence $U_n$ verify the compactness alternative from the concentration-compactness principle. Since $\{U_n\}$ is bounded in $(H^1(\real^N))^M$, there exists $U\in (H^1(\real^N))^M$ such that $U_n\rightharpoonup U$ and, from the compactness alternative, it follows that $U_n\to U$ in $(L^2(\real^N)\cap L^{2p+2}(\real^N))^M$. In particular $M(U)=M(\mathcal{Q})$, $T(U)\le T(\mathcal{Q})$ and 
\ben
J(U)=\lim J(U_n) = (p+1)T(\mathcal{Q})=J(\mathcal{Q}).
\een
By the minimality of  $\mathcal{Q}$, we conclude that $U\in G$. Moreover, $T(U)=T(\mathcal{Q})=\lim T(U_n)$, which implies that $U_n\to U$ in $(H^1(\real^N))^M$. $\qedsymbol$

Using the previous lemma, one may prove the following results in the same way as in the scalar case $M=1$:
\begin{prop}[$L^2$ concentration]
Let $V_0\in (H^1(\real^N))^M$ be such that $T_{max}(V_0)<\infty$. Then, if $V$ is the corresponding solution of (M-NLS), there exists $x:[0,T(V_0))\to\real^N$ such that, for any $R>0$,
\ben
\liminf_{t\to T_{max}(V_0)} \sum_{i=1}^M \int_{|x-x(t)|<R} |(V(t))_i|^2\ge M(\mathcal{Q}), \ \mathcal{Q}\in G.
\een
\end{prop}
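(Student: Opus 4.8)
\noindent\textit{Sketch of the proof:}
The plan is to transcribe the scalar blow-up concentration argument (see \cite{hmidikeraani}) to the present vector setting, using the preceding lemma in place of the sharp compactness statement for a single equation. First I would record the two facts furnished by the equation: by the blow-up alternative recalled in the introduction, $T_{max}(V_0)<\infty$ forces $T(V(t))=\|\nabla V(t)\|_2^2\to\infty$ as $t\to T_{max}(V_0)$, while, as in the proof of proposition \ref{global}, $M$ and $E$ are conserved, so that $M(V(t))=M(V_0)$ and $E(V(t))=E(V_0)$ for every $t$. Then, for $t\in[0,T_{max}(V_0))$, I would pass to the scale $\rho(t):=\left(T(\mathcal{Q})/T(V(t))\right)^{1/2}\to0$ and the $L^2$-invariant rescaling
\ben
\widetilde V(t):=\left(\rho(t)^{N/2}(V(t))_i(\rho(t)\,\cdot)\right)_{1\le i\le M}.
\een
The critical exponent enters decisively here: since $p=2/N$, i.e.\ $Np=2$, each of $J$, $T$ and $E$ is $2$-homogeneous under $u\mapsto\rho^{N/2}u(\rho\,\cdot)$ whereas $M$ is invariant; hence $M(\widetilde V(t))=M(V_0)$, $T(\widetilde V(t))=T(\mathcal{Q})$ and $E(\widetilde V(t))=\rho(t)^2E(V_0)\to0$.

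Next, for an arbitrary sequence $t_n\to T_{max}(V_0)$, the vectors $U_n:=\widetilde V(t_n)$ satisfy hypotheses (1)--(3) of the preceding lemma with $C=M(V_0)$ and $C'=T(\mathcal{Q})$. Applying it, given $\delta_0>0$ one obtains a subsequence $\{U_{n_k}\}$, points $y_k\in\real^N$ and a radius $R_0=R_0(\delta_0)>0$ with $\sum_i\int_{y_k+B_{R_0}}|(U_{n_k})_i|^2\ge M(\mathcal{Q})-\delta_0$. Undoing the rescaling through $z=\rho(t_{n_k})x$ turns this into
\ben
\sum_{i=1}^M\int_{|z-\rho(t_{n_k})y_k|<\rho(t_{n_k})R_0}|(V(t_{n_k}))_i|^2\ \ge\ M(\mathcal{Q})-\delta_0,
\een
and, since $\rho(t_{n_k})R_0\to0$, for every fixed $R>0$ and all large $k$ one gets $\sup_{y\in\real^N}\sum_i\int_{|z-y|<R}|(V(t_{n_k}))_i|^2\ge M(\mathcal{Q})-\delta_0$. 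Since $\{t_n\}$ is arbitrary, applying this with a sequence realizing the $\liminf$ over $t$ and then letting $\delta_0\to0$ gives, for every $R>0$,
\ben
\liminf_{t\to T_{max}(V_0)}\ \sup_{y\in\real^N}\ \sum_{i=1}^M\int_{|z-y|<R}|(V(t))_i|^2\ \ge\ M(\mathcal{Q}).
\een

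It then remains to anchor the concentration to a single moving point. The function $y\mapsto h(t,y):=\sum_i\int_{|z-y|<1}|(V(t))_i(z)|^2\,dz$ is continuous and vanishes at infinity, hence attains its maximum; I would let $x(t)$ be such a maximizer. One checks, exactly as for $M=1$, that this choice works for every $R>0$: re-centring the rescaled vectors at $x(t_n)$ --- i.e.\ considering $W_n:=\left(\rho(t_n)^{N/2}(V(t_n))_i(\rho(t_n)\,\cdot+x(t_n))\right)_{1\le i\le M}$ --- applying the lemma once more to $\{W_n\}$, and exploiting the unit-scale maximality of $x(t_n)$ to exclude that the surviving concentration cluster drifts off to spatial infinity in the $\rho(t)$-rescaled frame, one recovers $\liminf_{t\to T_{max}(V_0)}\sum_i\int_{|z-x(t)|<R}|(V(t))_i|^2\ge M(\mathcal{Q})$. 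The first three steps --- rescaling, invoking the lemma, unscaling --- are routine once the criticality bookkeeping $Np=2$ is in place; the genuine obstacle is this last step, namely producing a single curve $x(t)$ valid simultaneously for all radii, but it is entirely parallel to the scalar theory and uses nothing beyond the lemma already established. $\qedsymbol$
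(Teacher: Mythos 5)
Your overall strategy is the one the paper intends (the paper itself gives no proof here, deferring to the scalar argument of \cite{hmidikeraani}): the conservation of $M$ and $E$, the blow-up alternative, the $L^2$-critical rescaling $\rho(t)=(T(\mathcal{Q})/T(V(t)))^{1/2}$ with the bookkeeping $M(\widetilde V)=M(V_0)$, $T(\widetilde V)=T(\mathcal{Q})$, $E(\widetilde V)=\rho(t)^2E(V_0)\to0$ (using $Np=2$), and the application of the preceding lemma to an arbitrary sequence $t_n\to T_{max}(V_0)$ are all correct and exactly parallel to the scalar case. Up to and including the inequality $\liminf_{t\to T_{max}(V_0)}\sup_y\sum_i\int_{|z-y|<R}|(V(t))_i|^2\ge M(\mathcal{Q})$ for every $R>0$, there is nothing to object to.

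The gap is in the last step, and it is not merely a matter of detail. Taking $x(t)$ to maximize the mass in balls of \emph{fixed} radius $1$ settles the claim only for $R\ge1$. For $R<1$ the mechanism you invoke does not close: the lemma localizes a mass $M(\mathcal{Q})-\delta_0$ in a ball of radius $\rho(t_n)R_0=o(1)$ centred at some point $z_n$, and unit-scale maximality of $x(t_n)$ only tells you that the unit ball around $x(t_n)$ carries at least as much mass as the unit ball around $z_n$; it does not force $|x(t_n)-z_n|\to0$. Indeed every point within distance $1-\rho(t_n)R_0$ of $z_n$ already swallows the whole concentration core in its unit ball, so the maximizer is free to sit at distance $O(1)$ from $z_n$, in which case $B(x(t_n),R)$ with $R<1$ may miss the core entirely; recentring the rescaled profiles at $x(t_n)$ and reapplying the lemma produces new concentration points $\tilde y_k$ with no bound on $|\tilde y_k|$ better than $O(1/\rho(t_n))$, which is precisely what you would need to exclude. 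The standard repair keeps your architecture but shrinks the anchoring radius: choose $r(t)$ with $\rho(t)\ll r(t)\to0$ (say $r(t)=\rho(t)^{1/2}$) and let $x(t)$ maximize $y\mapsto\sum_i\int_{|z-y|<r(t)}|(V(t))_i|^2$. Your liminf--sup estimate, run at the $t$-dependent radius $r(t)$ (which eventually dominates $\rho(t)R_0$), gives $\liminf_{t\to T_{max}(V_0)}\sum_i\int_{|z-x(t)|<r(t)}|(V(t))_i|^2\ge M(\mathcal{Q})$, and since $r(t)<R$ for $t$ close to $T_{max}(V_0)$ this single curve $x(t)$ works simultaneously for every $R>0$.
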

\begin{prop}[Blowup profile]
Let $V_0\in (H^1(\real^N))^M$ be such that $T_{max}(V_0)<\infty$ and $M(V_0)=M(\mathcal{Q})$, where $\mathcal{Q}\in G$. Let $V$ be the corresponding solution of (M-NLS). Then, for any sequence $t_n\to T_{max}(V_0)$, there exists $\mathcal{Q}_0\in G$ and $y_n\in\real^N$ such that
\ben
\left(\frac{T(\mathcal{Q}_0)}{T(V(t_n))}\right)^{\frac{N}{4}}V\left(\left(\frac{T(\mathcal{Q}_0)}{T(V(t_n))}\right)^{\frac{1}{2}}\cdot + y_n, t_n\right)\to \mathcal{Q}_0\mbox{ in } (H^1(\real^N))^M
\een
\end{prop}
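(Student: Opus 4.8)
The plan is to follow the classical scalar argument of Merle--Raphaël (as adapted in, e.g., \cite{hmidikeraani}), substituting the vector-valued concentration lemma proved just above for its scalar counterpart. First I would fix a sequence $t_n\to T_{max}(V_0)$ and set $\rho_n=\left(T(\mathcal{Q})/T(V(t_n))\right)^{1/2}$, where $\mathcal{Q}\in G$ is fixed; note $T(V(t_n))\to\infty$ by the blowup alternative stated in the introduction, so $\rho_n\to 0$. Define the rescaled family
\ben
W_n(\cdot)=\rho_n^{N/2}V\left(\rho_n\,\cdot\,,t_n\right).
\een
A direct scaling computation shows $M(W_n)=M(V(t_n))=M(V_0)=M(\mathcal{Q})$ (mass conservation, plus the hypothesis), $T(W_n)=\rho_n^2\,T(V(t_n))=T(\mathcal{Q})$ for every $n$, and $E(W_n)=\rho_n^2 E(V(t_n))=\rho_n^2 E(V_0)\to 0$ since $E$ is conserved and $\rho_n\to0$. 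Thus $\{W_n\}$ satisfies exactly hypotheses (1)--(3) of the preceding lemma with $C=M(\mathcal{Q})$, $C'=T(\mathcal{Q})$.

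Next I would invoke the second assertion of that lemma: it yields, up to a subsequence, translations making $W_n$ converge strongly in $(H^1(\real^N))^M$ to some limit $U$, and moreover $U\in G$. Here one must be a little careful: the lemma as stated gives convergence ``$U_n\to U$'' without explicitly displaying the translation parameters, but its proof (via the concentration--compactness compactness alternative) produces $y_n\in\real^N$ such that $W_n(\cdot+y_n)\to U$ strongly. Re-examining the sketch, the strong convergence in $L^2\cap L^{2p+2}$ combined with $T(W_n(\cdot+y_n))\to T(U)=T(\mathcal{Q})$ upgrades to strong $H^1$ convergence. Setting $\mathcal{Q}_0:=U\in G$ and unwinding the definition of $W_n$ gives precisely
\ben
\left(\frac{T(\mathcal{Q}_0)}{T(V(t_n))}\right)^{N/4}V\left(\left(\frac{T(\mathcal{Q}_0)}{T(V(t_n))}\right)^{1/2}\cdot+y_n,t_n\right)\longrightarrow \mathcal{Q}_0\quad\text{in }(H^1(\real^N))^M,
\een
once one observes that $T(\mathcal{Q}_0)=T(\mathcal{Q})$: this holds because both $\mathcal{Q}_0,\mathcal{Q}\in G$, and every ground-state has the same value of $I$ (Remark after the definition), hence the same $J$, and in the critical case the Pohozaev/virial identities in the Remark preceding the lemma force $T(\mathcal{Q}_0)=\frac{N}{N p+ \dots}$— more simply, $T=\frac{Np}{2(p+1)}J$ and $M=\frac{N(2-p)+\dots}{\dots}J$ are each fixed multiples of $J=I$, which is constant on $G$. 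So the rescaling factor built from $\mathcal{Q}_0$ agrees with the one built from $\mathcal{Q}$, and the diagonal/subsequence extraction closes the argument.

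The main obstacle I anticipate is purely bookkeeping rather than conceptual: making the passage from the lemma's strong convergence statement to one that explicitly carries the translations $y_n$ and matches the exact scaling normalization in the proposition, and confirming $T(\mathcal{Q}_0)=T(V(t_n))\rho_n^2$ is consistent with the stated exponent $N/4$ (since $p=2/N$ makes $\rho_n^{N/2}$ the $L^2$-invariant scaling). One should also double-check that no loss of mass occurs in the concentration--compactness dichotomy — but that is exactly what the hypothesis $M(V_0)=M(\mathcal{Q})$ and the lemma rule out, since any splitting into two or more bubbles would, by the Gagliardo--Nirenberg bound and $E(W_n)\to0$ argument in the lemma's sketch, force $J(W_n)\to0$, contradicting $J(W_n)\to (p+1)T(\mathcal{Q})>0$.
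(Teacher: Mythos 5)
Your proposal is correct and follows exactly the route the paper intends: the paper gives no detailed proof here, deferring to the scalar argument of \cite{hmidikeraani} via the preceding compactness lemma, and your rescaling $W_n=\rho_n^{N/2}V(\rho_n\,\cdot\,,t_n)$ with $\rho_n=\left(T(\mathcal{Q})/T(V(t_n))\right)^{1/2}$, the verification of hypotheses (1)--(3) from conservation of $M$ and $E$ together with the blowup alternative, and the application of the lemma's second assertion (with the translations $y_n$ made explicit) is precisely that argument. Your justification that the scaling factor may be written with $\mathcal{Q}_0$ in place of $\mathcal{Q}$ -- namely that $T$ and $M$ are constant on $G$, being fixed multiples of $J=I=\lambda_G$ by the Pohozaev identity in the critical case -- is also the right observation, despite the garbled intermediate formula.
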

\begin{nota}
We call the reader's attention to the fact that, throughout this section, we have only assumed (P1). If (P1) is false, then the left hand side of the vector-valued Gagliardo-Nirenberg inequality is not positive (which gives $C_M=0$), and, using proposition \ref{global}, one sees that all solutions of (M-NLS) are global. This implies that, in some sense, our results regarding the vector-valued Gagliardo-Nirenberg inequality, the $L^2$-concentration phenomena and the blowup profile are optimal.
\end{nota}

\end{section}

\begin{section}{Acknowledgements}
This work was partially supported by FCT (Portuguese Foundation for Science and Technology) through the grant SFRH/BD/96399/2013. We would like to thank Mário Figueira for having called our attention to this problem and for his helpful suggestions, encouragement and precious remarks.

\end{section}

\end{document}